\theoremstyle{plain}
\theoremstyle{definition} 
\newtheorem{definition}{Definition}
\newtheorem{thm}{Theorem}
\newtheorem{lemma}{Lemma}
\newtheorem{cor}{Corollary}
\newtheorem{theorem}{Theorem}
\theoremstyle{remark}
\newtheorem*{remark}{Remark}
\numberwithin{equation}{section}
\numberwithin{thm}{section}
\numberwithin{lemma}{section}
\numberwithin{cor}{section}
\title{Bounds on \"{U}bercrossing and Petal Numbers for Knots}
\author{Colin Adams,
Orsola Capovilla-Searle,
Jesse Freeman,
Daniel Irvine,\\
Samantha Petti,
Daniel Vitek,
Ashley Weber,
Sicong Zhang} 
\begin{document}

\maketitle


\abstract{ An $n$-crossing is a point in the projection of a knot where $n$ strands cross so that each strand bisects the crossing. An \"ubercrossing projection has a single $n$-crossing and a petal projection has a single $n$-crossing such that there are no loops nested within others. The \"ubercrossing number, $\text{\"u}(K)$, is the smallest $n$ for which we can represent a knot $K$ with a single $n$-crossing. The petal number is the number of loops in the minimal petal projection. In this paper, we relate the \"{u}bercrossing number and petal number to well-known invariants such as crossing number, bridge number, and unknotting number. We find that the bounds we have constructed are tight for $(r, r+1)$-torus knots. We also explore the behavior of \"{u}bercrossing number under composition.}


\section{Introduction}
Knot theorists have traditionally considered projections of knots where crossings consist of a single understrand and a single overstrand. In \cite{Triple},  $n$-crossings, also known as multi-crossings, were introduced for projections of knots (having previously been introduced for projections of graphs). An $n$-crossing is a point in a knot or link projection where $n$ strands cross so that each strand bisects the crossing.  An $n$-crossing has $n$ strands that are labelled top to bottom $1, 2, ... , n$ respectively. There are various types of $n$-crossings characterized by where the strands with different heights are located. For all $n$-crossings, we read the height of the strands clockwise around the crossings and always beginning with the top strand. In \cite{Triple}, it was proved that for every $n \ge 2$, every knot and link has a projection such that all crossings are $n$-crossings. Hence, we can define $c_n(K)$ to be the least number of $n$-crossings in such a projection for the knot or link $K$. In \cite{Multi}, the authors proved that every knot has a projection with a single multi-crossing. In other words, given a knot or link $K$, there exists a positive integer $n$ such that $c_n(K) = 1$.

\begin{definition}
An {\bf \"ubercrossing projection} of a knot $K$ is a projection of $K$ with a single multi-crossing.
The {\bf  \"ubercrossing number} of a knot $K$, $\text{\"{u}}(K)$, is the the smallest $n$ for which there exists a projection with a single $n$-crossing. 
\end{definition}
 Note that an \"ubercrossing projection may have loops contained within other loops.  We call a loop that contains at least one other loop a {\bf nesting loop}. See Figure \ref{fig:uber}.
 
\begin{figure}[h]
\begin{center}
\includegraphics[scale=0.2]{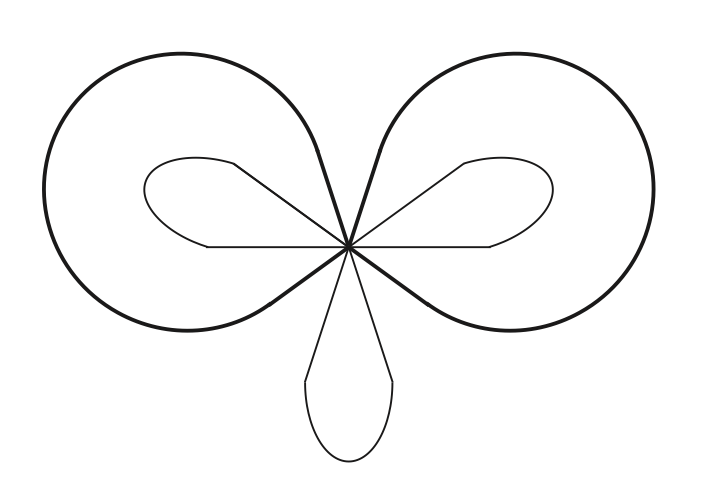}
\caption{An \"ubercrossing projection with two nesting loops.}
\label{fig:uber}
\end{center}
\end{figure}

To show that every knot has an \"ubercrossing projection, the authors of \cite{Multi} provided an algorithm, known as the petal algorithm, that takes any classical projection of a knot $K$ with only double crossings and generates an \"ubercrossing projection of $K$ with only one nesting loop that has just under half the other loops contained within it \cite{Multi}. This type of projection is known as a {\bf pre-petal projection} because we can fold the nesting loop into the central crossing and obtain a petal projection as shown in Figure \ref{fig:prepetal}.

\begin{figure}[h]
\begin{center}
\includegraphics[scale=0.3]{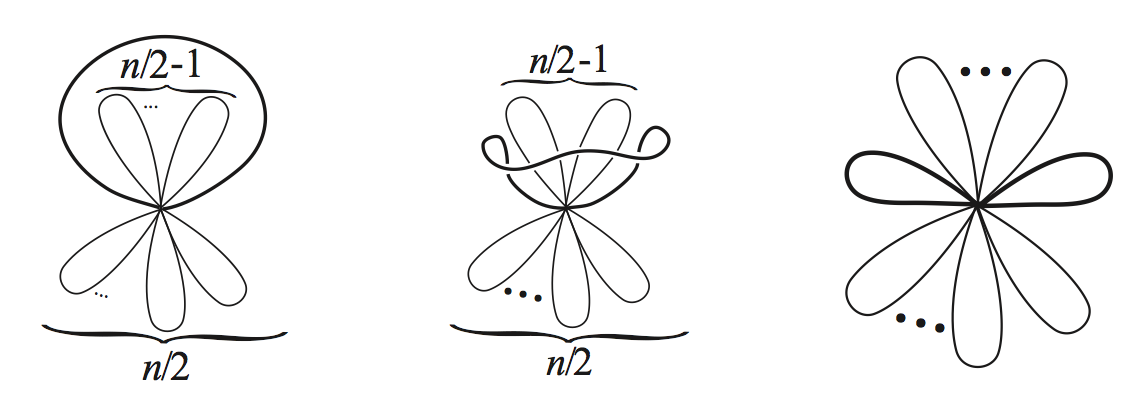}
\caption{Obtaining a petal projection from a pre-petal projection.}
\label{fig:prepetal}
\end{center}
\end{figure}

\begin{definition}
A {\bf petal projection} of a knot $K$ is a projection of $K$ with a single multi-crossing such that there are no nesting loops.
The {\bf petal number}, p(K), is the number of loops in the minimal petal projection, or equivalently, the
number of strands passing through the single $n$-crossing. A {\bf petal permutation} is the permutation of $n$ integers obtained by listing the labels corresponding to the heights of the $n$ strands as we travel clockwise around the single multi-crossing starting with the topmost strand, which is labelled 1.\end{definition}

Note that petal number is not defined for arbitrary links of more than one component. There are very few links that possess petal projections. Also note that the top strand of a petal projection can be pulled off and slid around to the bottom, resulting in a new petal permutation for the same knot. We consider this an equivalent permutation.

Let $T_{p,q}$ be the $(p,q)$-torus knot and $\overline{T_{p,q}}$ its mirror image. In \cite{Multi}, the petal number of $T_{r,r+1}$ was shown to be $2r+1$. In Section~\ref{bridge} we relate the \"ubercrossing number of a knot or link to the bridge number and show that for all $(r,r+1)$-torus knots, $\text{\"u}(T_{r,r+1})=2r$. This is the first infinite class of knots for which  $\text{\"u}(K)$ has been determined.We then investigate how the \"ubercrossing number behaves under composition and show that for any two knots $K_1$ and $K_2$, $\ddot{u}(K_1\#K_2)\leq \min\{\ddot{u}(K_1)+p(K_2)-1,\ddot{u}(K_2)+p(K_1)-1\}$. We use this to determine the exact \"ubercrossing number of all compositions of various torus knots of the form $T_{r,r+1}$. 

In Section \ref{unknotting} we find the following upper bound on the unknotting number, $u(K)$, in terms of the petal number, $u(K) \leq \frac{(p(K)-1)(p(K)-3)}{8}$. We show that this bound is realized exactly for only the $(r,r+1)$-torus knots. This implies that the  minimal petal permutations that represent the $(r,r+1)$-torus knots are unique up to the aforementioned equivalence.

In Section  \ref{petal into star} we relate petal number to the traditional crossing number $c(K)$ and find that $c(K)\leq\frac{p^2-2p-3}{4}$. Furthermore, the bound is realized for $(r,r+1)$-torus knots. 

In Section ~\ref{algorithm} we explore the petal algorithm and find that the minimal petal projection of any knot $K$ can be obtained by applying the petal algorithm to some projection of the knot. However, we show that the projection to which the algorithm is applied need not be a minimal crossing projection, by demonstrating that performing the petal algorithm on the minimal crossing projections of any two-braid knot does not generate a minimal petal projection.


\section{Bounds on \"{U}bercrossing Number} \label{bridge}

In this section, we first present a lower bound on \"{u}bercrossing number in terms of bridge number and show that this lower bound is realized for $(r,r+1)$-torus knots.

\begin{lemma}\label{bridge} $\ddot{u}(K) \ge 2b(K)$, where $b(K)$ is the bridge number.
\end{lemma}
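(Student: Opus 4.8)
The plan is to convert an \"ubercrossing projection realizing $n=\ddot{u}(K)$ strands at its single multi-crossing into an embedding of $K$ in $\mathbb{R}^3$ whose height function has at most $\lfloor n/2\rfloor$ local maxima. Since $b(K)$ equals the minimum, over all embeddings representing $K$, of the number of maxima of the standard height function, this gives $b(K)\le \lfloor n/2\rfloor\le n/2$, i.e. $\ddot{u}(K)\ge 2b(K)$.

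First I would put the projection in a standard position. Let $D$ be a small disk about the single multi-crossing. Inside $D$ the diagram consists of $n$ straight ``germs'' $g_1,\dots,g_n$ through the center of $D$, where $g_k$ is the $k$-th strand from the top; I lift so that near the center $g_k$ sits at height $h_k$ with $h_1>h_2>\cdots>h_n$, which is forced by the defining top-to-bottom height order of an $n$-crossing. Outside $D$ the diagram has \emph{no} crossings at all, so the $n$ outer arcs $o_1,\dots,o_n$ have pairwise disjoint simple images in the plane; I route each $o_i$ inside a thin vertical tube over its planar image and give it a height profile that is strictly monotone from one endpoint to the other, so $o_i$ carries no critical point. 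The curve obtained this way projects to the original \"ubercrossing diagram with the correct over/under data at the center, so it represents $K$.

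Next I would count the maxima. Traversing $K$ we meet the germs in some cyclic order $g_{\tau(1)},o_1,g_{\tau(2)},o_2,\dots,g_{\tau(n)},o_n$; set $v_i=h_{\tau(i)}$. Along $o_i$ the height moves monotonically from (roughly) $v_i$ to $v_{i+1}$, so a critical point occurs at $g_{\tau(i)}$ precisely when $v_i$ is a strict local extremum of the cyclic sequence $(v_1,\dots,v_n)$: a maximum if $v_{i-1}<v_i>v_{i+1}$ (bend $g_{\tau(i)}$ into a shallow bump, which does not change its projection), a minimum if $v_{i-1}>v_i<v_{i+1}$, and no critical point when $v_i$ lies between its neighbors. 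Hence the number of maxima equals the number of local maxima of $(v_1,\dots,v_n)$. Since the $h_k$ are distinct, so are the $v_i$, so the local maxima and local minima of this cyclic sequence strictly alternate; there are then equally many of each, hence at most $\lfloor n/2\rfloor$ local maxima, which is the desired bound.

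The main obstacle is the first step: checking that the monotone re-routing of the outer arcs together with the slight upward/downward bending of the germs can always be carried out compatibly — matching endpoint heights on $\partial D$, keeping the arcs in disjoint tubes, and creating no new crossings — so that the result is an honest embedding representing $K$. Once the curve is in this normal form, the combinatorial count is routine. For completeness one should also note that $n=\ddot{u}(K)\ge 2$ for any knot, so $\lfloor n/2\rfloor$ is a genuine bound, and recall the standard characterization of bridge number as a minimal number of maxima.
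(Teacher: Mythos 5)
Your proposal is correct and is essentially the paper's argument: both take the height function to be the direction of the axis through the single multi-crossing, make every arc outside the crossing monotone in that direction, and conclude that all critical points occur among the $\ddot{u}(K)$ strands at the crossing. The only cosmetic difference is that the paper phrases this as a side-view projection with at most $\ddot{u}(K)$ local extrema and divides by two, while you count the maxima directly via the alternation of extrema in the cyclic height sequence.
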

\begin{proof}
It suffices to find a projection of $K$ with at most $\ddot{u}(K)$ local extrema. First, embed the knot in space so that when we look down an axis we see an \"ubercrossing projection realizing the \"ubercrossing number. Let $A$ be the axis that passes through the \"ubercrossing and is perpendicular to the projection plane. Now consider the projection of the knot onto a plane so that the $A$ axis appears vertical, as in Figure~\ref{fig:uber-rotate}. 

\begin{figure}[h]
\begin{center}
\includegraphics[scale=0.3]{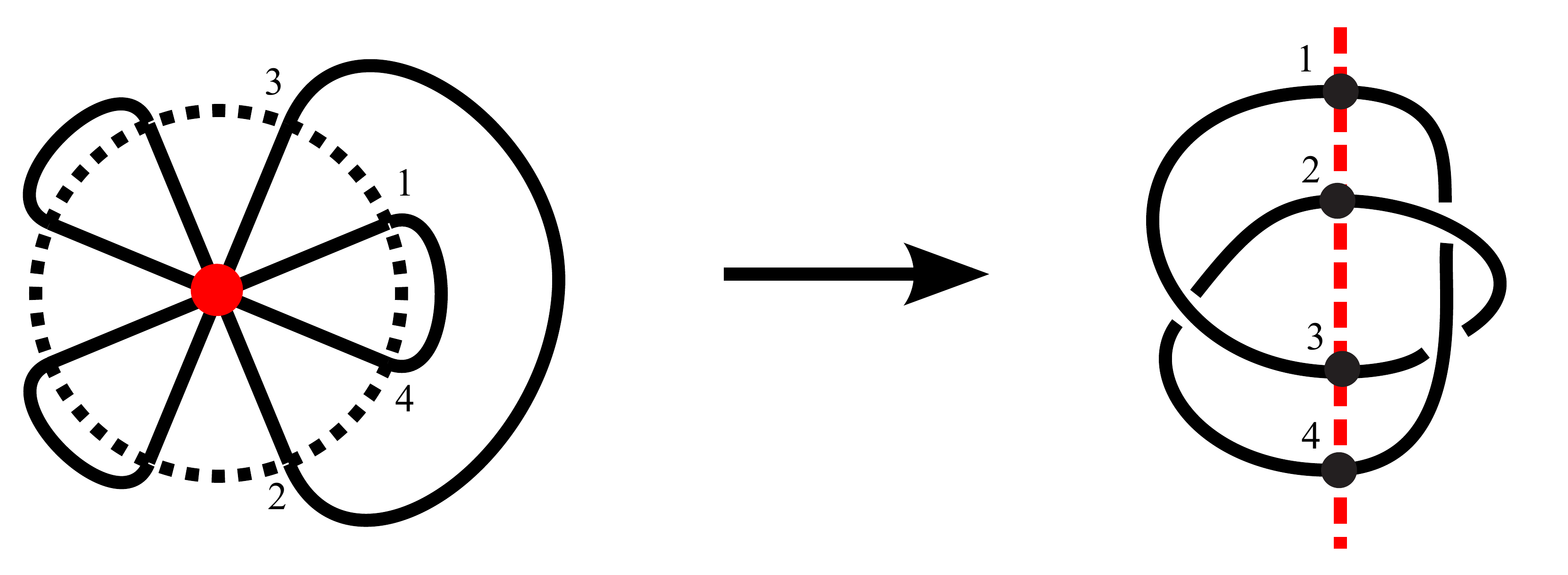}
\caption{A minimal \"{u}bercrossing projection of the trefoil knot isotoped so that $A$ appears vertical.}
\label{fig:uber-rotate}
\end{center}
\end{figure}

Because each strand in the single $n$-crossing of the \"{u}bercrossing projection passes through the axis $A$, the new projection of $K$ will cross $A$ in at least $\ddot{u}(K)$ points. However, we can isotope the knot so that the arcs that connect these points on $A$ are monotonically decreasing from their high point on $A$ to their low point on $A$. In this projection, the aforementioned $\ddot{u}(K)$ intersections of $K$ with $A$ are the only possible local extrema.  

\end{proof}

\begin{cor}
 $\ddot{u}(T_{r,r+1}) = 2r$ for all $r\ge2$.
\end{cor}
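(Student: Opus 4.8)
The plan is to prove the two inequalities $\ddot{u}(T_{r,r+1})\ge 2r$ and $\ddot{u}(T_{r,r+1})\le 2r$ separately; the first is essentially immediate from Lemma \ref{bridge}, so the real content is the second.

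For the lower bound, recall Schubert's theorem that the bridge number of the torus knot $T_{p,q}$ with $2\le p<q$ is exactly $p$, so $b(T_{r,r+1})=r$. Lemma \ref{bridge} then gives $\ddot{u}(T_{r,r+1})\ge 2b(T_{r,r+1})=2r$, and this step needs nothing further.

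For the upper bound I would exhibit an explicit übercrossing projection of $T_{r,r+1}$ whose single multi-crossing has exactly $2r$ strands. The petal projection of $T_{r,r+1}$ from \cite{Multi} already gives a $(2r+1)$-crossing übercrossing projection, so only a single strand needs to be saved, and the idea is that relaxing the ``no nesting loop'' condition buys exactly that: one of the $2r+1$ petals can be drawn instead as a loop encircling the others, and in this case its two arcs at the central crossing can be rerouted into neighboring loops so that the number of strands through the crossing drops to $2r$ while the knot type is unchanged. The case $r=2$ is the $4$-crossing projection of the trefoil drawn in Figure \ref{fig:uber-rotate}, and I would build the general case as the natural ``spiral'' generalization of that picture, with one nesting loop wound around $2r-1$ inner loops.

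The main obstacle is this construction together with its verification: one must present the projection explicitly enough to count its strands and to confirm --- by pushing the diagram through a controlled sequence of Reidemeister moves, or by recognizing it as a standard diagram of the $(r,r+1)$ torus knot --- that it really represents $T_{r,r+1}$ and not merely some other knot with the same bridge number. Once the explicit $2r$-crossing übercrossing projection is in hand, combining it with the lower bound yields $\ddot{u}(T_{r,r+1})=2r$.
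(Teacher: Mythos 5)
Your lower bound is exactly the paper's: Schubert gives $b(T_{r,r+1})=r$ and Lemma \ref{bridge} gives $\ddot{u}(T_{r,r+1})\ge 2r$. The issue is the upper bound, which you leave as a sketch plus an acknowledged ``main obstacle.'' As written this is a gap: you never actually produce the $2r$-strand übercrossing projection, and you frame the remaining work as needing a verification by Reidemeister moves that the resulting diagram is really $T_{r,r+1}$ --- a verification you do not carry out. The paper avoids all of this by simply invoking the general inequality $\ddot{u}(K)\le p(K)-1$, proved in \cite{Multi}, together with $p(T_{r,r+1})=2r+1$, so the upper bound is one line: $\ddot{u}(T_{r,r+1})\le p(T_{r,r+1})-1=2r$.

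That said, the gap is easily closed, and your instinct about ``relaxing the no-nesting condition to save one strand'' is exactly the content of that cited inequality. The correct move is the inverse of the petal-to-pre-petal fold shown in Figure \ref{fig:prepetal}: take the top strand of the $(2r+1)$-petal projection, pull it off the top of the multi-crossing, and expand it into a nesting loop surrounding (roughly half of) the remaining petals. This is a single isotopy of the given embedding, so the knot type is automatically unchanged --- no Reidemeister verification or recognition argument is needed --- and the number of strands through the single crossing drops from $2r+1$ to $2r$. Your description of ``rerouting its two arcs into neighboring loops'' is vaguer than this and not obviously a well-defined operation; replace it with the pre-petal move (or just cite $\ddot{u}(K)\le p(K)-1$ from \cite{Multi}, as the paper does) and your argument becomes complete and coincides with the paper's.
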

\begin{proof}
 By \cite{Schubert}, $b(T_{p,q}) = \min(p,q)$. In \cite{Multi}, it was proved that $\ddot{u}(K) \le p(K) - 1$ for any link $K$ and $p(T_{r,r+1}) = 2r+1$. Combining this upper bound with the lower bound in Lemma \ref{bridge}, we obtain$$2r = 2b(T_{r,r+1}) \le \ddot{u}(T_{r,r+1}) \le p(T_{r,r+1}) - 1 = 2r.$$ Hence $\ddot{u}(T_{r,r+1}) = 2r$ for all $r \ge 2$. \end{proof} 

We now turn to composition. It has been conjectured for over one hundred years that $c(K_1\#K_2)=c(K_1)+c(K_2)$ and for many knots this is known to be true, including alternating knots, torus knots and compositions of these two types. However, in \cite{Adams1},  examples are given of knots for which the $n$-crossing number $c_n(K)$ is known to be sub-additive under composition for $n \geq 4$. We  continue this line of inquiry by considering how the \"ubercrossing number behaves under composition. 

\begin{definition} A {\bf ribbon} consists of two strands passing through the single multi-crossing in  an \"ubercrossing projection that are connected to one another, forming a loop. In a {\bf left ribbon} the overstrand of the ribbon forms the left half of the loop. In a {\bf right ribbon} the overstrand of the ribbon forms the right half of the loop. 
\end{definition}

\begin{figure}[h]
\begin{center}
\includegraphics[scale=.2]{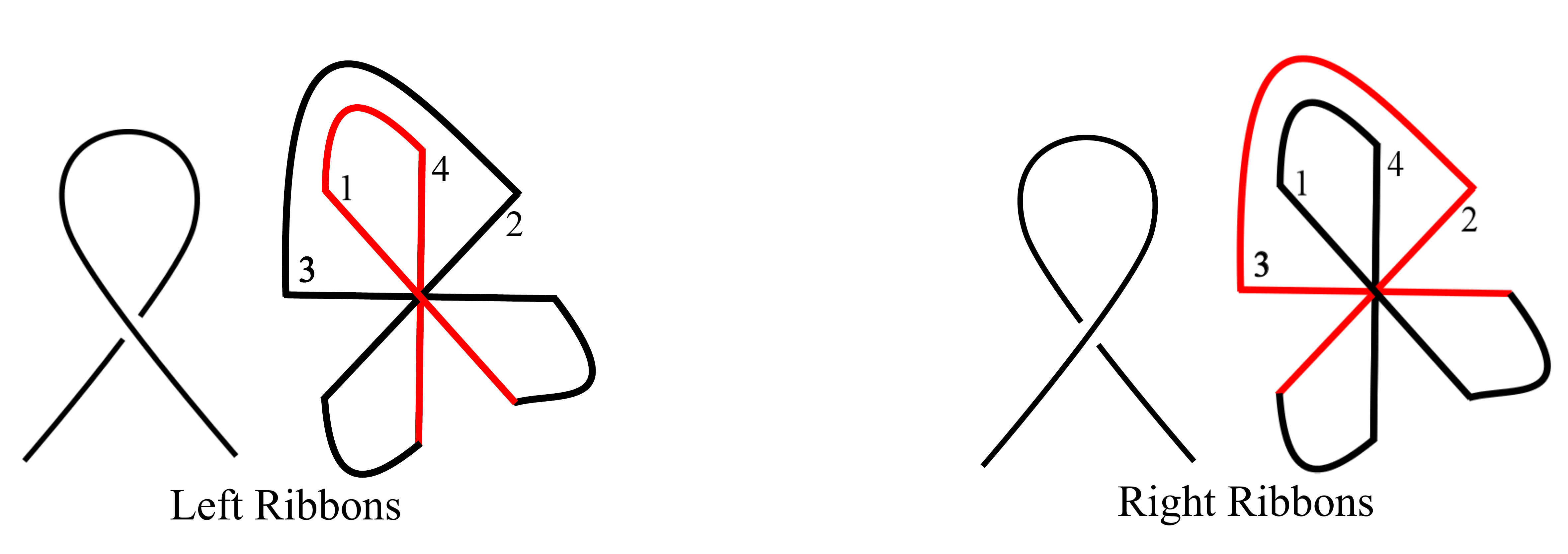}
\caption{Pictured are two of the ribbons in the trefoil \"ubercrossing projection, which are highlighted.} 
\end{center}
\end{figure}

\begin{definition} 
In a minimal pre-petal projection, $K$, the {\bf bottom ribbon} is a ribbon containing the bottom strand such that its  loop is contained in the nesting loop of the pre-petal projection.  
\end{definition}

It is important to note that in a minimal pre-petal projection the bottom strand will never be part of the nesting loop. If it were then it would not be the pre-petal projection corresponding to the minimal petal projection because we could simply remove the nesting loop and obtain a petal projection with fewer petals.

\begin{theorem} \label{comp} Let $K_1$ and $K_2$ be knots, where the minimal \"{u}bercrossing projection of $K_2$ is a pre-petal projection. Without loss of generality, assume \\ $\ddot{u}(K_1)+p(K_2)\leq \ddot{u}(K_2)+p(K_1)$. If one of the following conditions holds, then
\[\ddot{u}(K_1\#K_2)\leq \ddot{u}(K_1)+p(K_2)-3.\] 
\begin{enumerate}
\item The bottom ribbon of the minimal pre-petal projection of $K_2$ is a right ribbon and there exists a left ribbon in the minimal \"ubercrossing projection of $K_1$. 
\item The bottom ribbon of the minimal pre-petal projection of $K_2$ is a left ribbon and there exists a right ribbon in the minimal \"ubercrossing projection of $K_1$. 
\end{enumerate}
\end{theorem}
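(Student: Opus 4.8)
The plan is to begin from the standard connect-sum construction for übercrossing projections — the one behind the bound $\ddot{u}(K_1\#K_2)\le \ddot{u}(K_1)+p(K_2)-1$ — and to show that the two handedness hypotheses are precisely what let us pull two more strands out of the single multi-crossing. Since replacing a diagram by its mirror image turns a left ribbon into a right ribbon and conversely, Case~2 is the mirror image of Case~1, so I would only treat Case~1: the bottom ribbon $R_2$ of the minimal pre-petal projection $D_2$ of $K_2$ is a right ribbon, and the minimal übercrossing projection $D_1$ of $K_1$ contains a left ribbon $R_1$.

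First I would build the combined diagram. Form $K_1\#K_2$ by grafting $D_1$ into the loop of the bottom ribbon $R_2$ of $D_2$: delete a short arc from the loop of $R_2$ and a short arc from the loop of $R_1$, and splice the four resulting ends by a band, placing all of $D_1$ inside the nesting loop $N$ of $D_2$. This is possible because the loop of $R_2$ lies inside $N$, and, since the bottom strand of $D_2$ lies inside $N$ but is not part of $N$, $D_1$ can be inserted without obstructing the fold below. The result is a diagram of $K_1\#K_2$ with two multi-crossings joined along the band. Now perform on $N$ the fold of a nesting loop into the multi-crossing (Figure~\ref{fig:prepetal}, as in \cite{Multi}); because $D_1$ sits inside $N$, the fold carries $D_1$ and its multi-crossing in toward the multi-crossing of $D_2$, and after an isotopy the two multi-crossings coalesce into one. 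Arranged as in the standard construction, this single multi-crossing has $\ddot{u}(K_1)+p(K_2)-1$ strands, each bisecting it — so far only the known bound.

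The gain comes from the band. Because $R_1$ is a left ribbon while $R_2$ is a right ribbon, the two band strands along which $D_1$ was grafted into $D_2$ meet, once the two multi-crossings are brought together, as a clasp of opposite crossings, and a single Reidemeister~II move pulls both of these strands entirely out of the single multi-crossing. (Were $R_1$ and $R_2$ of the same handedness the clasp would be essential and no such move would exist; this is exactly why the theorem has two cases, one for each handedness of the bottom ribbon of $K_2$.) After this move we have a single multi-crossing with at most $\ddot{u}(K_1)+p(K_2)-3$ strands, each still bisecting it, and with underlying knot $K_1\#K_2$; hence $\ddot{u}(K_1\#K_2)\le \ddot{u}(K_1)+p(K_2)-3$, and Case~2 follows by mirroring.

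I expect the main obstacle to be the bookkeeping at the band: one must verify precisely that a left ribbon spliced to a right ribbon produces a clasp removable by a Reidemeister~II move (and that like-handed ribbons do not), that this move deletes two strands from the single multi-crossing rather than merely simplifying the diagram away from it, and that after the fold, the coalescence, and the move every surviving strand still bisects the multi-crossing, so that the picture is a genuine übercrossing projection. I would carry this out with a short sequence of figures tracking the heights of the strands through the fold and through the Reidemeister move, treating the strand count as the one quantity that has to be pinned down exactly.
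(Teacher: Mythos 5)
Your construction runs in the opposite direction from the paper's (you insert $K_1$ into the nesting loop of $K_2$ and then fold the nesting loop, whereas the paper keeps $K_2$ in its pre-petal form and drapes it around the chosen left ribbon of $K_1$ so that every strand of $K_2$ bisects the multi-crossing of $K_1$), and as written it has a genuine gap at exactly the point where all the work lies. The assertion that ``after an isotopy the two multi-crossings coalesce into one'' with $\ddot{u}(K_1)+p(K_2)-1$ strands is not justified: merging two separate multi-crossings is the heart of any such argument, and it requires arranging the heights so that the $\ddot{u}(K_1)$ strands of $D_1$ and the strands of $D_2$ all bisect a common crossing. Folding the nesting loop of $D_2$ into its crossing does not by itself carry the crossing of $D_1$ into the crossing of $D_2$, and moreover the fold adds a strand (a pre-petal projection with $p(K_2)-1$ strands folds to a petal projection with $p(K_2)$ strands), so even granting coalescence your intermediate count would be $\ddot{u}(K_1)+p(K_2)$, not $\ddot{u}(K_1)+p(K_2)-1$. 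The paper avoids both problems by never folding: it uses the pre-petal projection of $K_2$ as is ($p(K_2)-1$ strands, which equals $\ddot{u}(K_2)$ under the hypothesis), isotopes $K_2$ to hug its bottom ribbon, and draws that ribbon nesting the left ribbon of $K_1$ through the single shared crossing.

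The second gap is in how the two strands are removed. In the paper the composition is performed by cutting the bottom strand $b$ of $K_2$ and the overstrand $o$ of the left ribbon of $K_1$ and gluing, having arranged beforehand that $b$ sits exactly one level above $o$ in the multi-crossing; it is this adjacency of heights (as in trivial-petal removal) that lets the connecting loop be pulled out through the multi-crossing, deleting two strands. Your argument replaces this with a claim that opposite handedness produces a clasp removable by a Reidemeister~II move, but a Reidemeister~II move concerns crossings between two strands away from the multi-crossing and does not by itself extract two strands from an $n$-crossing; without tracking and prescribing the heights of the spliced strands, the loop joining the two summands need not be removable no matter what the handedness is. You flag these verifications as ``expected obstacles,'' but they are precisely the content of the theorem: the handedness hypotheses are what allow the bottom ribbon of $K_2$ to nest the left ribbon of $K_1$ so that $b$ and $o$ can be made adjacent in height, and the height adjacency is what yields the reduction from $\ddot{u}(K_1)+p(K_2)-1$ to $\ddot{u}(K_1)+p(K_2)-3$.
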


\begin{proof}
Assume Condition 1, where $K_1$ in a minimal \"ubercrossing projection with a left ribbon and $K_2$ in its minimal pre-petal projection with a right bottom ribbon. The case for Condition 2 follows similary. We provide a means of composing $K_1$ and $K_2$ in order to obtain an \"ubercrossing projection with $\ddot{u}(K_1)+p(K_2)-3$ strands passing through the \"ubercrossing. We illustrate this process in Figure \ref{steps}.

\begin{figure} [h!]
	\centering
	\begin{subfigure}[b]{0.3\textwidth}
		\centering
		\includegraphics[scale=.2]{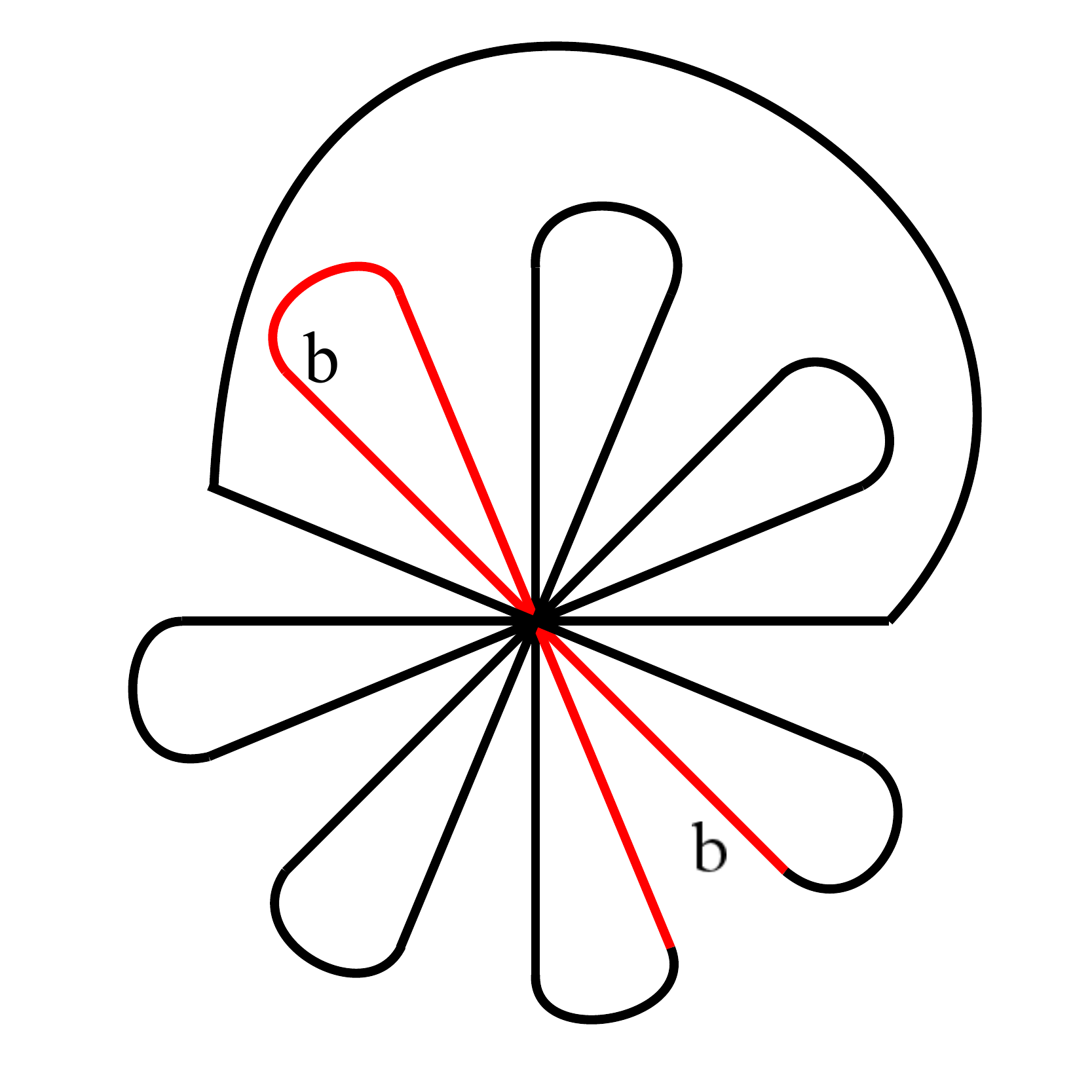}
		\caption{}
	\end{subfigure}
	\begin{subfigure}[b]{0.3\textwidth}
		\centering
		\includegraphics[scale=.2]{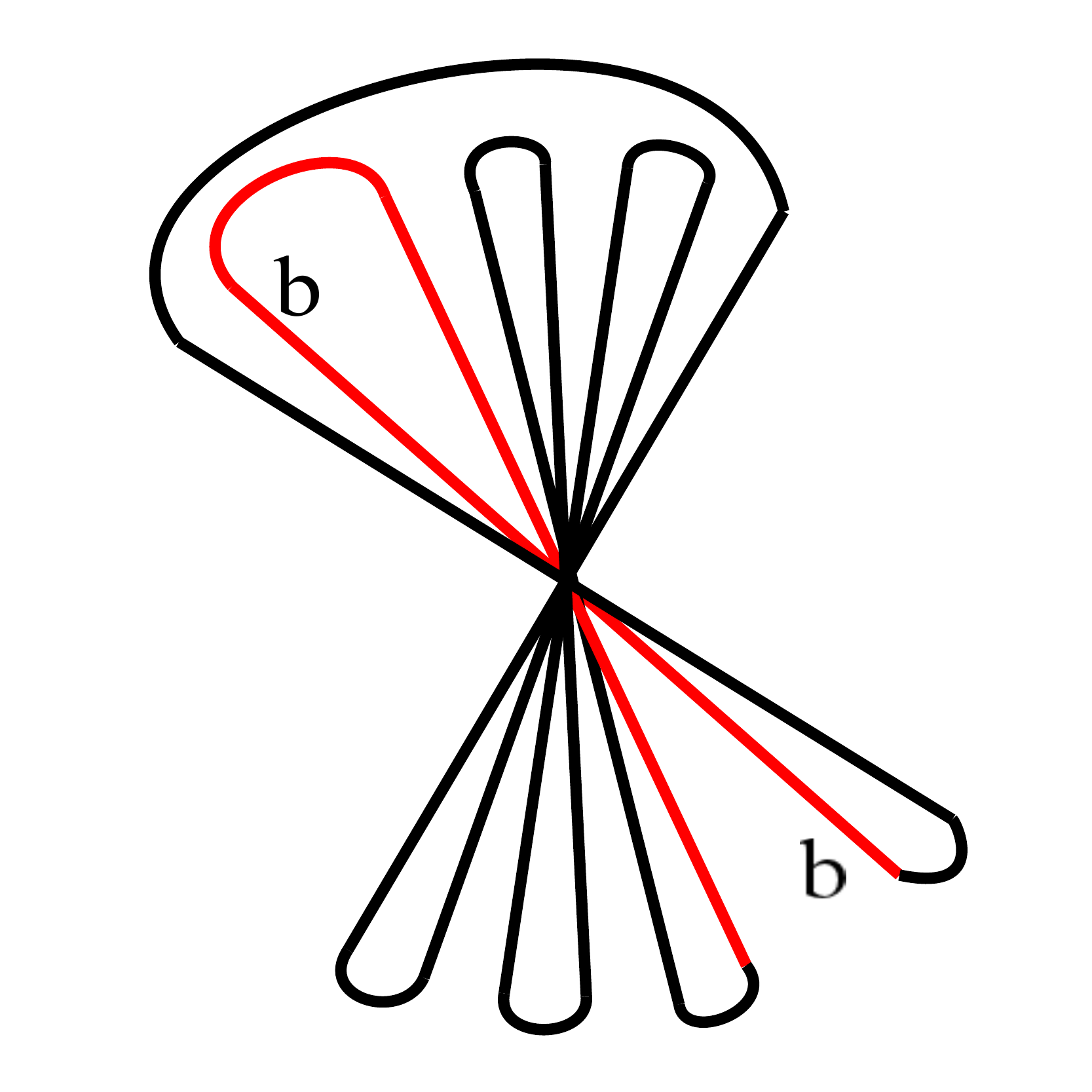}
		\caption{}
	\end{subfigure}
	\begin{subfigure}[b]{0.3\textwidth}
		\centering
		\includegraphics[scale=.2]{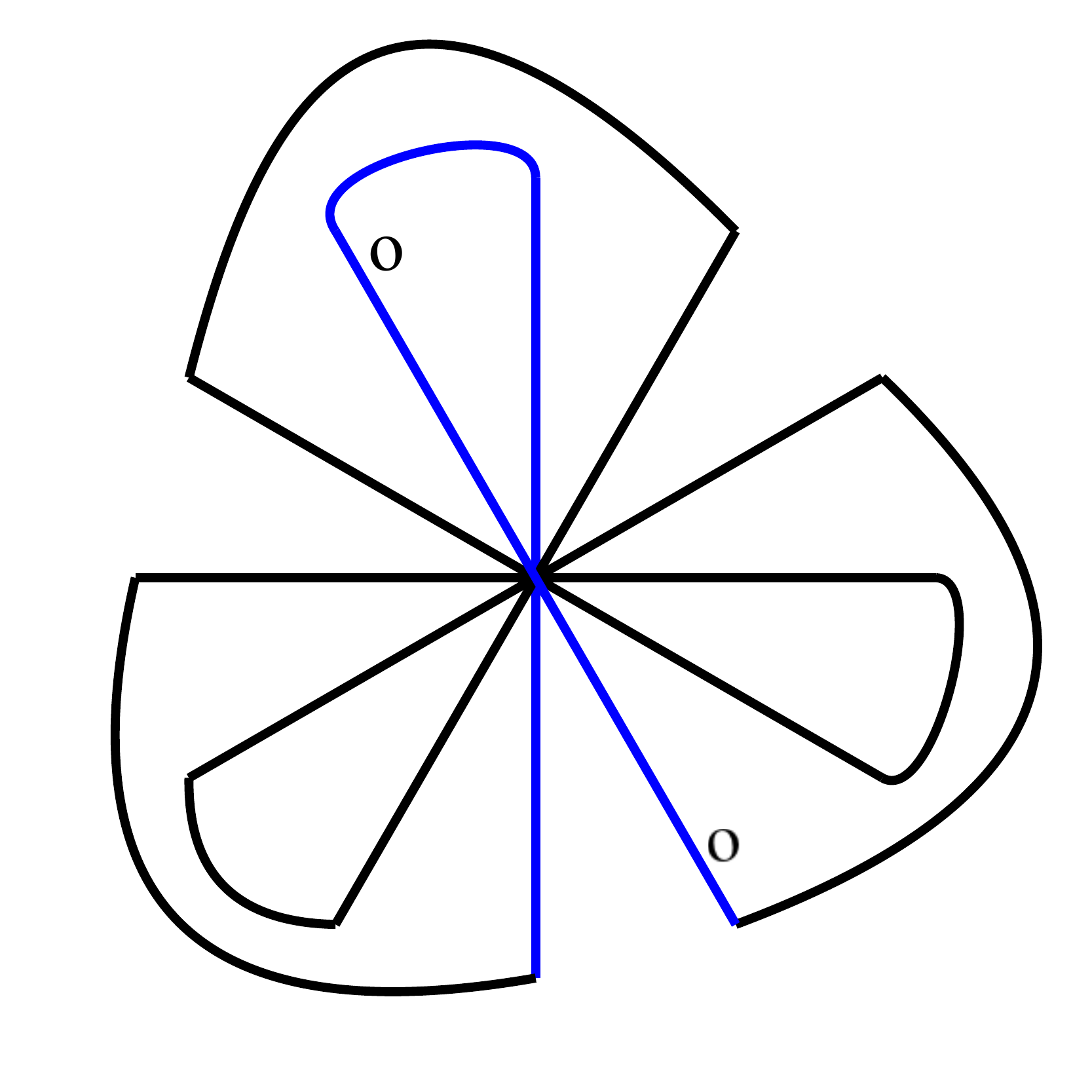}
		\caption{}
	\end{subfigure}
	\begin{subfigure}[h]{0.3\textwidth}
		\centering
		\includegraphics[scale=.2]{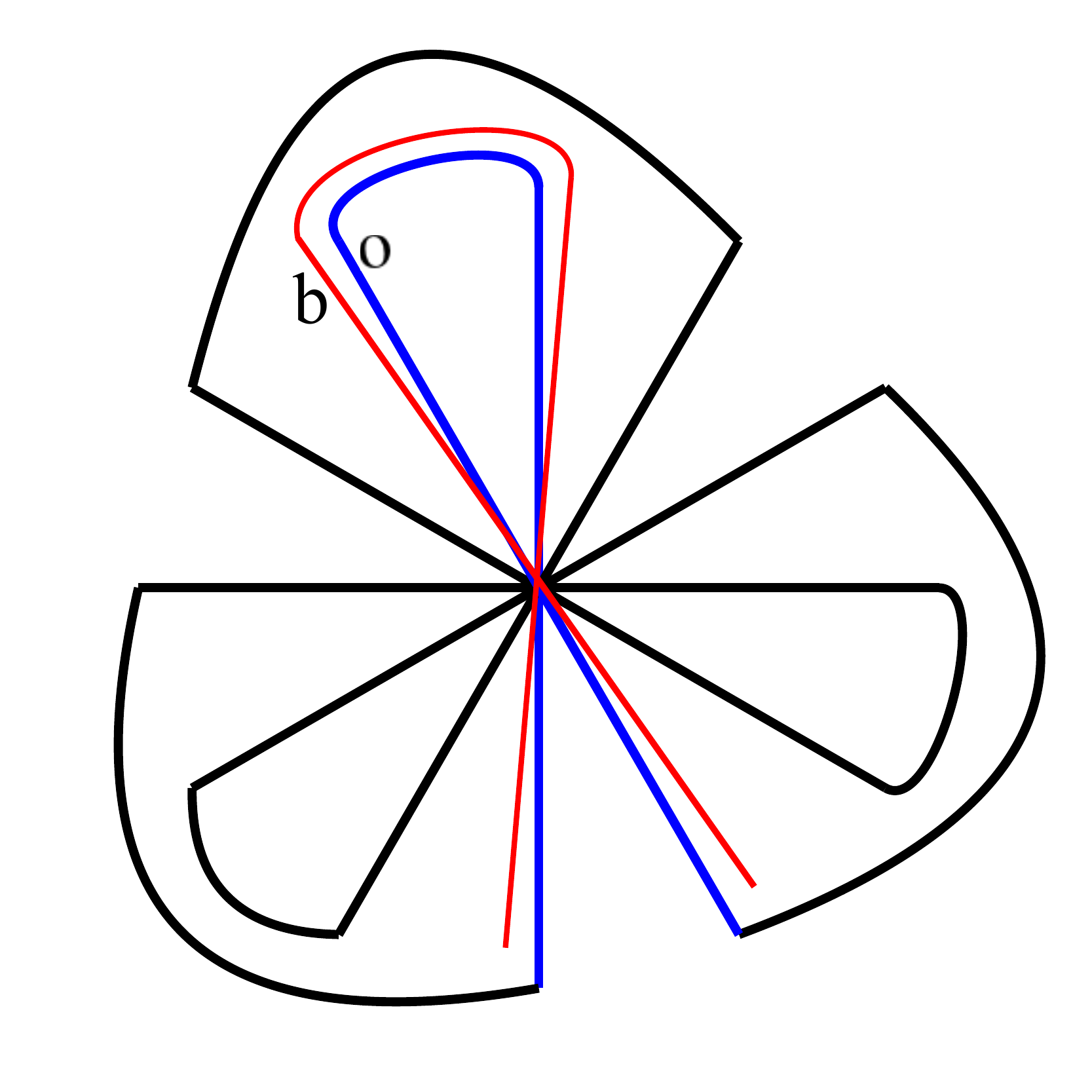}
		\caption{}
	\end{subfigure}
	\begin{subfigure}[h]{0.3\textwidth}
		\centering
		\includegraphics[scale=.2]{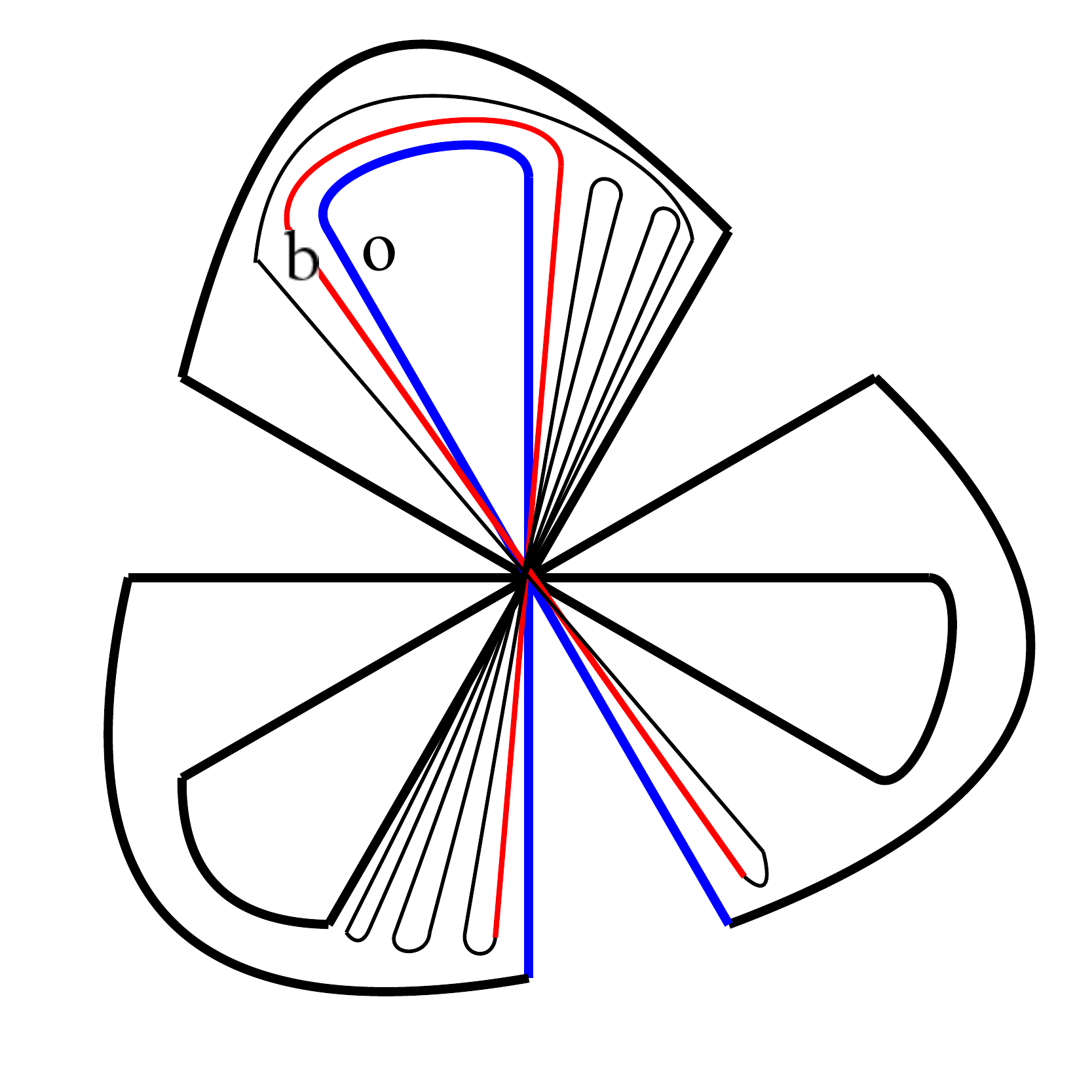}
		\caption{}
	\end{subfigure}
	\begin{subfigure}[h]{0.3\textwidth}
	\centering
		\includegraphics[scale=.2]{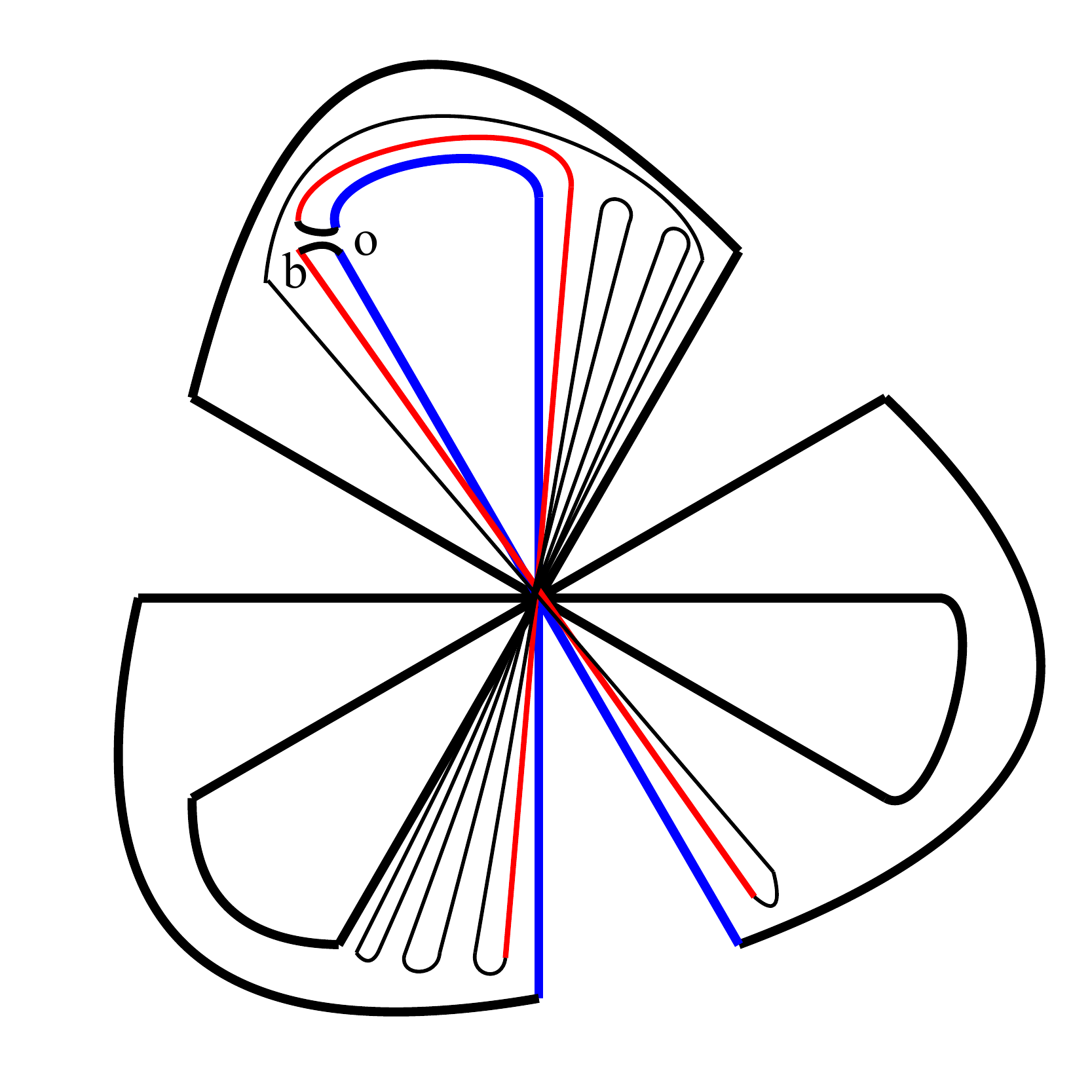}
		\caption{}
	\end{subfigure}
	\begin{subfigure}[h]{0.3\textwidth}
		\centering
		\includegraphics[scale=.2]{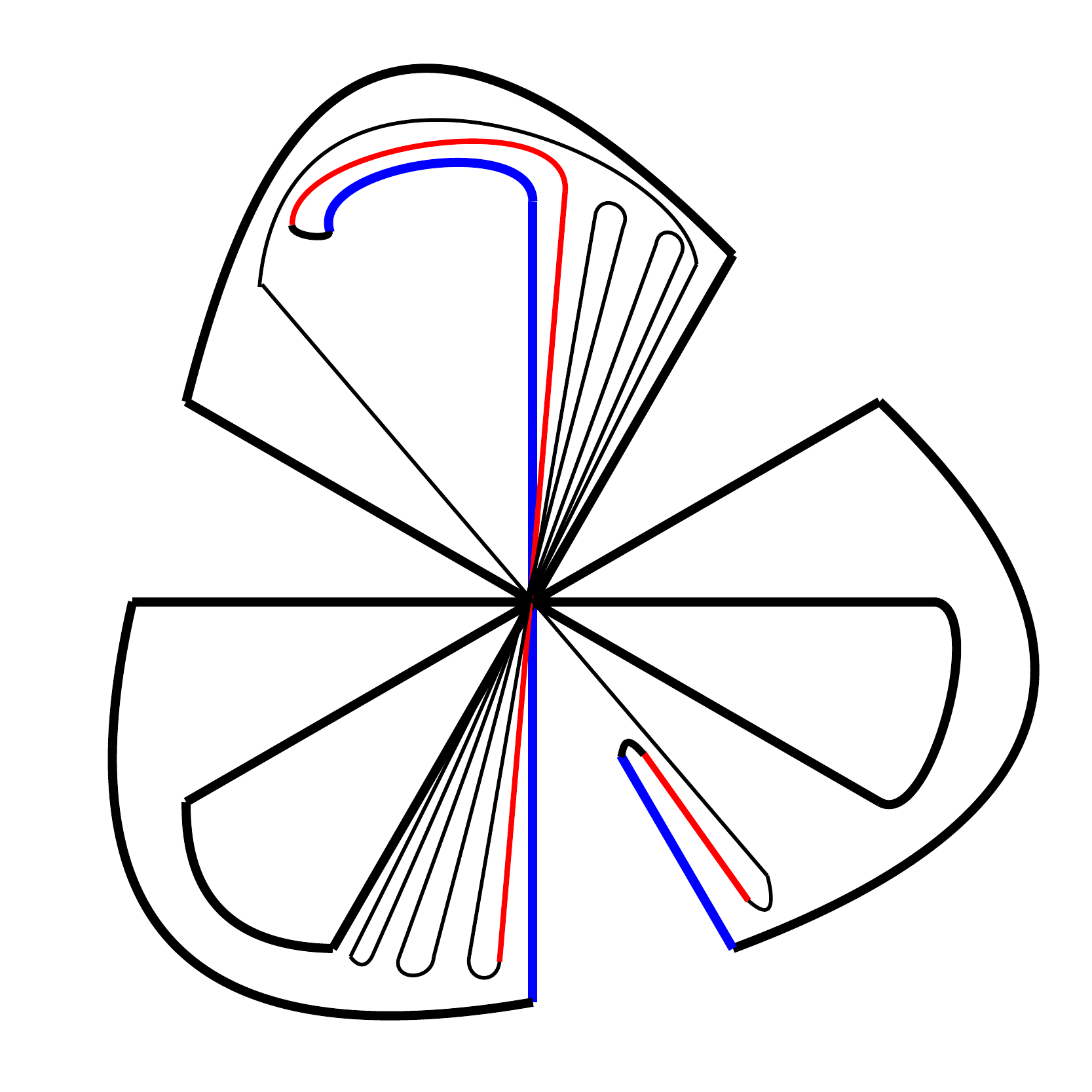}
		\caption{}
	\end{subfigure}
	\begin{subfigure}[h]{0.3\textwidth}
		\centering
		\includegraphics[scale=.2]{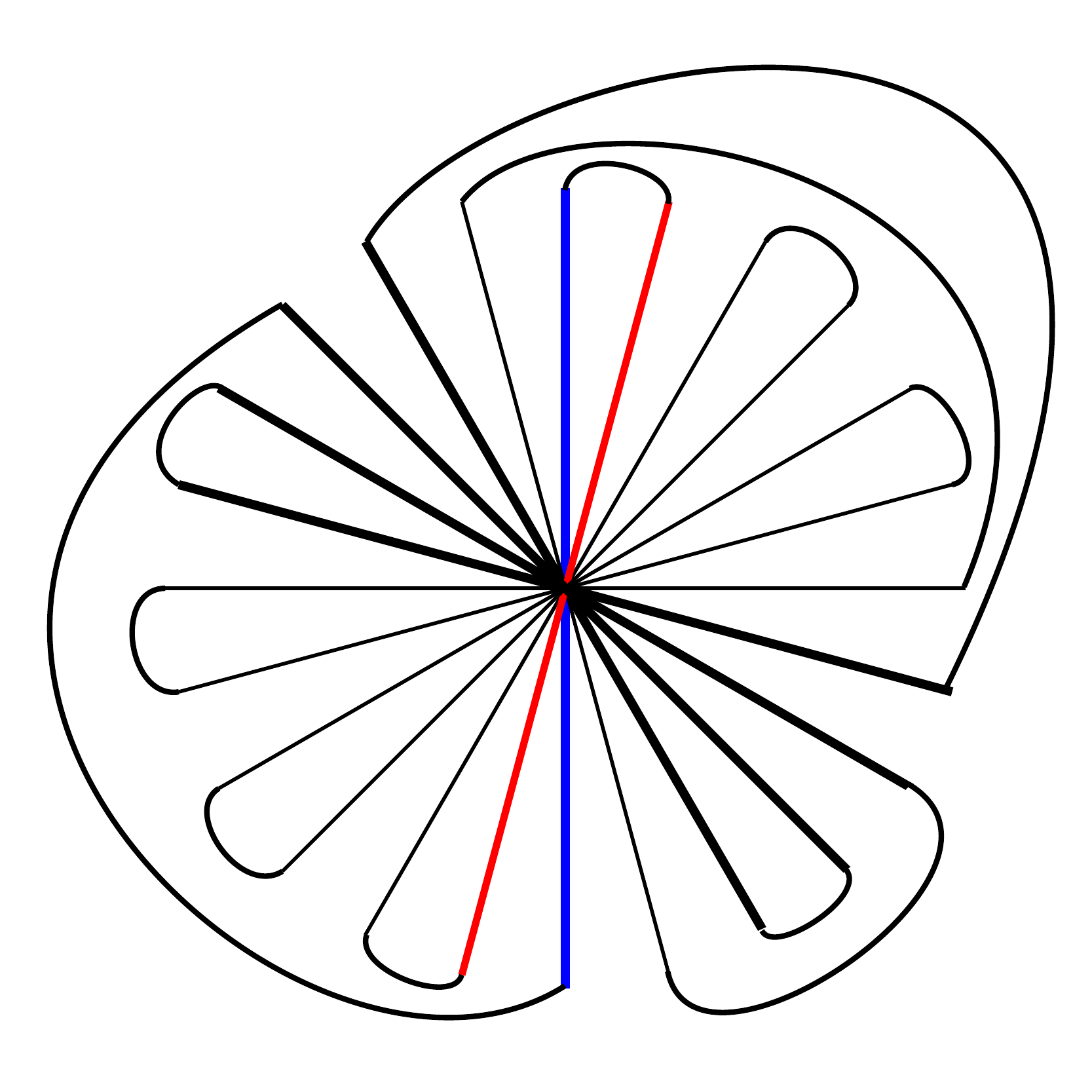}
		\caption{}
	\end{subfigure}
\caption{The steps for composing.}
\label{steps}
\end{figure}

\begin{enumerate}
\item  Identify the bottom ribbon of $K_2$ in its minimal pre-petal projection as in Figure \ref{steps}(a). Recall that we are assuming that the bottom ribbon is a right ribbon.

\item Isotope $K_2$ so that it ``hugs'' the bottom ribbon, as in Figure \ref{steps}(b). Pull the parts of the knot contained in the nesting loop in so that they closely surround the loop of the bottom ribbon. The other strands should be pulled close to ends of the bottom ribbon. 

\item  Identify a left ribbon in $K_1$ as in Figure \ref{steps}(c).  There may be more than one left ribbon in $K_1$, however it does not matter which one we choose. Label the overstrand within the left ribbon $o$. Now we are prepared to draw an \"ubercrossing projection of the the disjoint union of $K_2$ and $K_1$. We will begin with a projection of $K_1$ and add the projection of $K_2$ in the following two steps. In Step 4 we draw in the bottom ribbon of $K_2$. In Step 5 we draw the rest of $K_2$.

\item Draw the bottom ribbon of $K_2$ around the selected left ribbon in $K_1$ so that the bottom ribbons nests the left ribbon, as in Figure \ref{steps}(d).  Additionally, the strands of the bottom ribbon bisect the multi-crossing of $K_1$. The bottom strand of the the bottom ribbon of $K_2$ should be one level above the overstrand of the selected left ribbon, which we labelled $o$ in the crossing. 

\item Draw the rest of $K_2$ so that $K_2$ is contained entirely in the regions between the selected left ribbon in $K_1$ and the strands of $K_1$ adjacent to it, as in Figure \ref{steps}(e). Think of this as copying the projection in Step 2 and placing it on top of the the projection in Step 4 so that the bottom ribbon is aligned. Each strand of $K_2$ will bisect the crossing. The result is a projection of a disjoint union of $K_1$ and $K_2$ in which $\ddot{u}(K_1)+p(K_2)-1$ strands bisect the single multi-crossing in the projection.

\item Compose $K_1$ and $K_2$ in the following manner as in Figure \ref{steps}(f).  Cut the bottom strand of $K_2$, which we call strand $b$, and the overstrand of the selected left ribbon of $K_1$, which we call strand $o$. Glue together the cut ends of $o$ and $b$. We have an \"ubercrossing projection of $K_1\#K_2$.  

\item Since strands $b$ and $o$ are one level apart in height within the crossing, we can remove the loop connecting $b$ and $o$ by pulling it out through the multi-crossing, as in Figure \ref{steps}(g). This eliminates two strands in the multi-crossing. We obtain an \"ubercrossing projection with $(\ddot{u}(K_1)+p(K_2)-1)-2$ strands passing through the multi-crossing, as in Figure \ref{steps}(h).
\end{enumerate}

Thus, $\ddot{u}(K_1\# K_2)\leq \ddot{u}(K_1)+p(K_2)-3$.

\end{proof}

\begin{remark} 
For every achiral knot, there exists a minimal \"ubercrossing projection with a left ribbon and a minimal \"ubercrossing projection with a right ribbon. This is because when we reflect a projection all right ribbons become left ribbons. The conditions on Theorem \ref{comp} are only relevant when all minimal \"ubercrossing projections of $K_1$ have only left ribbons or only right ribbons. We have not yet found any examples of such knots.
\end{remark}

\begin{cor} Let $K_1$ and $K_2$ be knots. Then, 
\[\ddot{u}(K_1\#K_2)\leq \min\{\ddot{u}(K_1)+p(K_2)-1,\ddot{u}(K_2)+p(K_1)-1\}.\] 
\end{cor}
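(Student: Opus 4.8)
The plan is to obtain the corollary as the unconditional weakening of Theorem~\ref{comp}, by recycling the construction in that theorem's proof but stopping one step short of the end. Since connect sum is commutative, it suffices to prove the single asymmetric bound $\ddot{u}(K_1\#K_2)\le \ddot{u}(K_1)+p(K_2)-1$ for all knots $K_1,K_2$; applying this with the two factors interchanged yields $\ddot{u}(K_1\#K_2)\le \ddot{u}(K_2)+p(K_1)-1$ as well, and one then takes the minimum of the two bounds. So the entire task reduces to producing, from $K_1$ and $K_2$, an \"ubercrossing projection of $K_1\#K_2$ with $\ddot{u}(K_1)+p(K_2)-1$ strands through its multi-crossing.

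To build such a projection I would first dispense with the hypothesis of Theorem~\ref{comp} that the minimal \"ubercrossing projection of $K_2$ is a pre-petal projection. By \cite{Multi}, $K_2$ has a pre-petal projection $P_2'$ through whose multi-crossing exactly $p(K_2)-1$ strands pass and which folds into a minimal petal projection of $K_2$ --- this is precisely the object the petal algorithm of \cite{Multi} produces, and it is what underlies the inequality $\ddot{u}(K)\le p(K)-1$ quoted above. In particular $P_2'$ has a well-defined bottom ribbon. I would then take $K_1$ in a minimal \"ubercrossing projection, pick an arbitrary ribbon of $K_1$ (not necessarily left or right), and run Steps 1 through 6 of the proof of Theorem~\ref{comp} with $P_2'$ playing the role of $K_2$, omitting only Step~7. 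Those six steps merely locate the bottom ribbon of $K_2$ and some ribbon of $K_1$, wrap $K_2$ around the chosen ribbon of $K_1$ with every strand of $K_2$ bisecting the multi-crossing of $K_1$, and perform a single band connect sum; none of this uses whether the ribbons involved are left or right. The left/right matching in the hypotheses of Theorem~\ref{comp} is invoked only in Step~7, to guarantee that the two glued strands $b$ and $o$ lie one level apart so that the loop joining them can be pulled through the multi-crossing. Hence Steps 1--6 alone already deliver an \"ubercrossing projection of $K_1\#K_2$ with $\ddot{u}(K_1)+p(K_2)-1$ strands, and the asymmetric bound --- and with it the corollary --- follows.

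The step I expect to require the most care is the interface with Theorem~\ref{comp}: one must verify that the pre-petal projection $P_2'$ furnished by \cite{Multi} genuinely meets all the assumptions used (sometimes only implicitly) in Steps 1--6, namely that it has an honest bottom ribbon, that $K_2$ can be isotoped to ``hug'' that ribbon as in Step~2, and --- most importantly --- that the disjoint-union diagram coming out of Step~5 has exactly $\ddot{u}(K_1)+p(K_2)-1$ strands through the multi-crossing rather than $\ddot{u}(K_1)+p(K_2)$, i.e. that one loop of $K_2$ is really absorbed by the wrapping around the chosen ribbon of $K_1$. If extracting this bookkeeping from the figures accompanying the proof of Theorem~\ref{comp} proves awkward, the fallback is to re-derive the count directly from the structure of the petal algorithm of \cite{Multi}, using that its pre-petal output already realizes $p(K_2)-1$ strands, instead of treating the proof of Theorem~\ref{comp} as a black box.
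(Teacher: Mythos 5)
Your proposal is correct and reaches the stated bound by essentially the same construction as Theorem~\ref{comp}, but it resolves the chirality issue differently, and more economically, than the paper does. The paper's own proof keeps the full seven-step construction: if the minimal \"ubercrossing projection of $K_1$ lacks a ribbon of the handedness required by Theorem~\ref{comp}, it adds a trivial petal of that handedness (two extra strands through the multi-crossing), runs Steps 1--7 with the minimal pre-petal projection of $K_2$, and lets the Step-7 cancellation remove those two strands again, netting $\ddot{u}(K_1)+p(K_2)-1$. You instead observe that the left/right matching is used only in Step 7 (the pull-through move needs the glued strands $b$ and $o$ to be adjacent in height), so you wrap the pre-petal projection of $K_2$ (with its $p(K_2)-1$ strands, furnished by \cite{Multi}) around an arbitrary ribbon of $K_1$'s minimal \"ubercrossing projection and stop after the band connect sum of Step 6, reading the count $\ddot{u}(K_1)+p(K_2)-1$ directly off Step 5; your reduction to a single asymmetric inequality via commutativity of connect sum matches the paper's ``without loss of generality.'' Both routes are valid at the paper's level of rigor; yours avoids the trivial-petal padding, while the paper's keeps the corollary as a literal rerun of the theorem's construction. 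One small point of care in your version: if the chosen ribbon of $K_1$ has the same handedness as the bottom ribbon of $K_2$, the strand of $K_1$'s ribbon running parallel to the bottom strand $b$ is its understrand rather than the overstrand $o$, and the cut-and-glue should be performed on that parallel strand so the connecting arcs create no new crossings; this does not affect the strand count.
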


\begin{proof} Without loss of generality, assume $\ddot{u}(K_1)+p(K_2)-1\leq \ddot{u}(K_2)+p(K_1)-1$. Assume $K_2$ has a right bottom ribbon. The case for a left bottom ribbon follows similarly. If $K_1$ does not have a left ribbon, then add a trivial petal that is a left ribbon at any height in the \"ubercrossing. We draw an \"ubercrossing projection of the disjoint union of $K_1$ and $K_2$ as done in Theorem \ref{comp}. Use the projection of the minimal pre-petal projection of $K_2$ and the \"ubercrossing projection of $K_1$ with the added left ribbon. There will be $p(K)-1+\ddot{u}(K) +2$ strands passing throught the single crossing. When we compose $K_1$ and $K_2$ by connecting the right bottom ribbon of $K_2$ and the left ribbon of $K_1$, as done in Theorem \ref{comp}, we eliminate 2 of the strands passing through the single crossing. We obtain $\ddot{u}(K_1\#K_2)\leq \ddot{u}(K_1)+p(K_2)-1$.

\end{proof}

\begin{cor}\"{u} $(T_{r_1, r_1+1} \# \cdots \# T_{r_n, r_n +1}) = (\sum_{i = 1}^n 2r_i) - 2(n-1)$.
\label{torus-comp}
\end{cor}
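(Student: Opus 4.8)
The plan is to prove the two inequalities separately: the lower bound is immediate from Lemma~\ref{bridge}, and the upper bound is an induction on $n$ built on Theorem~\ref{comp}. Throughout I assume $r_i\ge 2$ for every $i$, so that the Corollary $\ddot u(T_{r,r+1})=2r$ applies to each factor (a summand with $r_i=1$ is the unknot and may be dropped). Write $K=T_{r_1,r_1+1}\#\cdots\#T_{r_n,r_n+1}$. For the lower bound, recall Schubert's theorem that bridge number is additive under connected sum in the form $b(K_1\#K_2)=b(K_1)+b(K_2)-1$ \cite{Schubert}, so that $b(K)=\bigl(\sum_{i=1}^n b(T_{r_i,r_i+1})\bigr)-(n-1)$; since $b(T_{r_i,r_i+1})=\min(r_i,r_i+1)=r_i$, this gives $b(K)=\bigl(\sum_{i=1}^n r_i\bigr)-(n-1)$, and Lemma~\ref{bridge} yields $\ddot u(K)\ge 2b(K)=\bigl(\sum_{i=1}^n 2r_i\bigr)-2(n-1)$, which is the claimed value.

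For the upper bound I would prove, by induction on $k$, the statement that $T_{r_1,r_1+1}\#\cdots\#T_{r_k,r_k+1}$ has an \"ubercrossing projection with exactly $\bigl(\sum_{i=1}^k 2r_i\bigr)-2(k-1)$ strands through its multi-crossing which moreover contains both a left ribbon and a right ribbon. The base case $k=1$ is the minimal \"ubercrossing projection of $T_{r_1,r_1+1}$ coming from its minimal petal projection, whose ribbon structure is the content of Claim~$(\star)$ below. For the inductive step set $K_1=T_{r_1,r_1+1}\#\cdots\#T_{r_{k-1},r_{k-1}+1}$, equipped with the projection supplied by the inductive hypothesis, and $K_2=T_{r_k,r_k+1}$ in its minimal pre-petal projection; this pre-petal projection has $p(T_{r_k,r_k+1})-1=2r_k=\ddot u(T_{r_k,r_k+1})$ strands, so it is a minimal \"ubercrossing projection and the hypothesis of Theorem~\ref{comp} that the minimal \"ubercrossing projection of $K_2$ be a pre-petal projection is met. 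The requirement $\ddot u(K_1)+p(K_2)\le\ddot u(K_2)+p(K_1)$ is automatic: $p(K_1)\ge\ddot u(K_1)+1$ by the bound $\ddot u\le p-1$ of \cite{Multi}, whence $\ddot u(K_2)+p(K_1)\ge 2r_k+\ddot u(K_1)+1=\ddot u(K_1)+p(K_2)$. The bottom ribbon of the pre-petal projection of $K_2$ is either a left or a right ribbon; since by the inductive hypothesis the projection of $K_1$ contains a ribbon of each type, the appropriate one of Conditions~1--2 of Theorem~\ref{comp} holds, and the composition construction of that theorem produces an \"ubercrossing projection of $K_1\#K_2$ with $\bigl(\sum_{i=1}^{k-1}2r_i-2(k-2)\bigr)+(2r_k+1)-3=\bigl(\sum_{i=1}^{k}2r_i\bigr)-2(k-1)$ strands. (To apply Theorem~\ref{comp} verbatim one wants $K_1$ in a \emph{minimal} \"ubercrossing projection, which follows from the lower bound together with the previous inductive step; in any event the construction in the proof uses only the strand count of $K_1$ and the presence of the required ribbon.)

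It remains to see that this projection of $K_1\#K_2$ again contains both a left and a right ribbon, so the induction can continue. Inspecting the construction of Theorem~\ref{comp}, the composition move destroys exactly one ribbon of $K_1$ (one whose chirality is complementary to that of the bottom ribbon of $K_2$, since it is glued to that bottom ribbon) and the bottom ribbon of $K_2$, and leaves every other ribbon of $K_1$ and of $K_2$ intact. Thus $K_1$ retains a ribbon of the chirality equal to that of $K_2$'s bottom ribbon, and $K_2$ retains a ribbon of the opposite chirality (it had ribbons of both types, by Claim~$(\star)$, and only the bottom one is consumed); together these give the composite projection a left ribbon and a right ribbon. Taking $k=n$ yields $\ddot u(K)\le\bigl(\sum 2r_i\bigr)-2(n-1)$, and combining with the lower bound gives equality.

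The one genuinely non-routine ingredient is Claim~$(\star)$: for every $r\ge 2$, the minimal pre-petal projection of $T_{r,r+1}$ contains at least one left ribbon and at least one right ribbon. I would verify this directly from the explicit minimal petal permutation of $T_{r,r+1}$ recorded in \cite{Multi}: reading the heights of the strands clockwise around the multi-crossing determines which pairs of strands are joined outside the crossing into ribbons and, for each such pair, whether its overstrand is the left or the right half of the loop, so $(\star)$ becomes a finite pattern check, uniform in $r$. This is the step where care is needed; everything else in the argument is bookkeeping about strand counts and about which ribbons survive the composition move.
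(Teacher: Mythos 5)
Your overall route is the paper's: the lower bound via Lemma \ref{bridge} together with Schubert's $b(K_1\#K_2)=b(K_1)+b(K_2)-1$ is identical, and the upper bound is the same iterated application of Theorem \ref{comp} to the pre-petal projections of the $T_{r_i,r_i+1}$ (which are minimal \"ubercrossing projections since $p(T_{r,r+1})-1=2r=\ddot{u}(T_{r,r+1})$). The place where you diverge is exactly the step you yourself flag as non-routine, and it is not closed in your write-up: your Claim~$(\star)$, that the minimal pre-petal projection of $T_{r,r+1}$ contains both a left and a right ribbon, is only promised as ``a finite pattern check'' on the explicit petal permutation and is never carried out. Since the entire induction hinges on having both chiralities available at every stage, this is a genuine gap as written. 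The paper closes it with a one-line general observation that makes the permutation irrelevant: the top strand of a pre-petal projection is the overstrand of both loops it bounds, the one on its left and the one on its right, so it automatically belongs to one left ribbon and one right ribbon; and since the composition of Theorem \ref{comp} only cuts and removes the bottom strand $b$ of $K_2$ and the chosen overstrand $o$ of $K_1$, it does not disturb the two ribbons through the top strand of $K_2$, which therefore supply both chiralities in the composite projection.

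A second, related soft spot is your survival bookkeeping. It is not accurate that the composition ``destroys exactly one ribbon of $K_1$ and the bottom ribbon of $K_2$ and leaves every other ribbon intact'': the strands $b$ and $o$ pulled out of the multi-crossing each belong to two loops, so the loops at their other ends are also altered. Your inductive hypothesis records only that $K_1$ has some ribbon of each chirality; it does not prevent the surviving-chirality ribbon of $K_1$ from being precisely the other loop through $o$, nor $K_2$'s opposite-chirality ribbon from being the other loop through $b$, in which case your persistence argument breaks down. Tracking the specific ribbons through the top strand of $K_2$, as the paper does (or strengthening your inductive invariant to name ribbons disjoint from the strands to be cut), repairs both issues. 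The rest of your argument --- the verification of the WLOG inequality, the minimality of the inductive projection via the lower bound, and the strand count $\bigl(\sum_{i=1}^{k-1}2r_i-2(k-2)\bigr)+(2r_k+1)-3=\bigl(\sum_{i=1}^{k}2r_i\bigr)-2(k-1)$ --- is correct and in places more careful than the paper's own exposition.
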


\begin{proof} It follows from the fact that $p(T_{r,r+1}) = 2r+1$ and $\ddot{u}((T_{r,r+1}) = 2r$ that for these knots, the pre-petal diagram realizes the  \"ubercrossing number. Thus, by Theorem \ref{comp},  $\ddot{u}(T_{i, i+1}\#T_{j, j+1})\leq 2i+2j-2$. We claim the composition of two pre-petal projections according to the method described in Theorem \ref{comp} will have both a left and right ribbon. The top strand of a pre-petal projection will always be part of one left ribbon and one right ribbon. When we compose the knots the bottom strand of the pre-petal projection that acts as $K_2$ is cut and glued. This local operation does not effect the types of ribbon the top strand of $K_2$ is a part of. Thus, the top strand of $K_2$ forms both a left and right ribbon in the compostion. This guarantees that we can apply Theorem \ref{comp} when composing additional $(r,r+1)$-torus knots. As we compose additional torus knots, we obtain $\ddot{u} (T_{r_1, r_1+1} \# \cdots \# T_{r_n, r_n +1}) \leq (\sum_{i = 1}^n 2r_i) - 2n + 2$.

By Lemma \ref{bridge}, we know that $\ddot{u}(K) \ge 2b(K)$. Since $b(K_1\# K_2) = b(K_1) + b(K_2) -1$ by \cite{Schubert}, we have:  
\begin{align*}
\ddot{u}(T_{r_1, r_1+1} \# \cdots \# T_{r_n, r_n +1}) &\ge 2b(T_{r_1, r_1+1} \# \cdots \# T_{r_n, r_n +1}) \\
& = 2(b(T_{r_1, r_1 +1}) + \cdots + b(T_{r_n, r_n +1}) - (n-1)) \\
& = 2((\sum_{i=1}^n r_i) - (n-1))
\end{align*}
Therefore, we have $\ddot{u}(T_{r_1, r_1+1} \# \cdots \# T_{r_n, r_n +1}) = (\sum_{i = 1}^n 2r_i) - 2(n-1)$
\end{proof}


\section{Unknotting Number and Petal Number}\label{unknotting}

In this section, we show how to unknot a petal projection by changing the relative heights of the strands in the single $n$-crossing. From this, we obtain an upper bound on unknotting number.

\begin{definition}
The \emph{unknotting number} of a knot $K$, denoted $u(K)$, is the minimum number of times that $K$ has to pass through itself in 3-space, or equivalently, the minimum number of crossing changes needed to change a planar projection of $K$ into the trivial knot, over all projections of $K$.
\end{definition}

As mentioned in the introduction, there is an equivalence of petal permutations representing a given knot that reflects the fact the top strand can be removed and then placed on the bottom. We define an equivalence class, $\overline{[\sigma]}$, on petal permutations so that all petal permutations that can be obtained from each other in this way are equivalent: $(1 ,a_2 ,\ldots,a_p) \equiv (1,b_2, \ldots b_p)$ if the permutation $([a_2-1]_p, [a_3-a_2]_p,\dots,[a_p-a_{p-1}]_p, [1-a_p]_p)$ can be obtained by cyclically rotating the permutation $([b_2-1]_p, [b_3-b_2]_p,\dots,[b_p-b_{p-1}]_p, [1-b_p]_p)$ where $[a]_p$ denotes $a$ mod $p$. The value $a_i-a_{i+1}$ represents the change in height as we travel around the $a_i-a_{i+1}$ petal. The equivalence relation makes it so that it does not matter which strand we label 1 as long as the sequence of height changes is the same up to rotation.

In the following proof, we use petal permutations to keep track of how we change the relative heights of strands and remove strands in the process of obtaining the unknot. We then use the equivalence classes of permutations to show that the bound is tight for $(r, r+1)$-torus knots. 

\begin{theorem}
Let $K$ be a knot, then 
$$u(K) \leq \frac{(p(K)-1)(p(K)-3)}{8}.$$
Equality holds if and only if $K=T_{r,r+1}$ or $K=\overline{T_{r,r+1}}$ for some integer $r$.
\end{theorem}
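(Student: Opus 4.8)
The plan is to establish the inequality by giving an explicit unknotting procedure on a petal projection, and then to characterize the equality case by a count-the-crossings argument. For the upper bound, I would start with a petal projection of $K$ with $p = p(K)$ petals and petal permutation $\sigma = (1, a_2, \ldots, a_p)$. The key observation is that changing the relative height of two strands in the single multi-crossing is a single crossing change (or can be realized by one), and that once two strands become adjacent in height one can sometimes remove a petal entirely by an isotopy through the crossing, exactly as in Step 7 of the proof of Theorem~\ref{comp}. So I would induct on $p$: show that with at most $\frac{p-3}{2}$ crossing changes one can convert a $p$-petal projection into a $(p-2)$-petal projection of the same knot (or a simpler one), and that the base case $p = 3$ (the smallest petal number of a nontrivial knot, the trefoil, with $u = 1$) and $p=1$ (the unknot, $u=0$) satisfy the bound. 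Summing $\sum_{k} \frac{(2k+1)-3}{2} = \sum_{k=1}^{(p-1)/2 - 1} k$ over the odd values of the petal number from $p$ down to $3$ should telescope to $\frac{(p-1)(p-3)}{8}$. Concretely: sorting the heights $1,2,\dots,p$ into their cyclic petal order is a sorting problem, and the worst case number of adjacent transpositions needed to ``peel off'' the top strand so that a petal can be collapsed is $\frac{p-3}{2}$ — this is where the careful bookkeeping with the height-change permutation $([a_2-1]_p, \ldots, [1-a_p]_p)$ enters.

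For the equality direction, the hard part is the ``only if'': I would argue that if $u(K) = \frac{(p-1)(p-3)}{8}$ then every step of the peeling induction must be maximally wasteful, which forces the petal permutation to be, up to the equivalence $\overline{[\sigma]}$, a unique permutation — and that permutation must be the one for $T_{r,r+1}$ (with $p = 2r+1$), whose petal permutation is known from \cite{Multi}. The ``if'' direction then requires the complementary lower bound $u(T_{r,r+1}) \geq \frac{(p-1)(p-3)}{8} = \frac{(2r-2)(2r)}{8} = \frac{r(r-1)}{2} = \binom{r}{2}$; since $T_{r,r+1}$ is a positive torus knot, its unknotting number equals its genus-type bound, $u(T_{p,q}) = \frac{(p-1)(q-1)}{2}$ (Kronheimer–Mrowka), giving $u(T_{r,r+1}) = \frac{(r-1)r}{2}$ exactly, matching the upper bound. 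So for the torus knots the inequality is forced to be an equality by an external lower bound on $u$, and for all other knots one shows at least one peeling step can be done more efficiently, making the inequality strict.

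I expect the main obstacle to be the equality characterization rather than the inequality itself. Proving $u(K) < \frac{(p-1)(p-3)}{8}$ whenever $K \neq T_{r,r+1}, \overline{T_{r,r+1}}$ requires showing that any petal permutation other than the torus-knot one admits a cheaper reduction — either a crossing change that drops the petal number by more than two, or an initial configuration where fewer than $\frac{p-3}{2}$ transpositions suffice to enable a collapse. Making this rigorous means understanding precisely which petal permutations of length $p$ are ``extremal'' for the sorting cost, and then matching the extremal ones against the explicit torus petal permutation from \cite{Multi} up to the rotation equivalence on height-changes. I would also need to handle the subtlety flagged in the paper — that collapsing a petal must not accidentally change the knot type in an uncontrolled way — by being careful that each removed loop is genuinely a Reidemeister-I-type or trivial-clasp removal after the height change. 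The uniqueness-of-minimal-petal-permutation corollary stated in the introduction would then fall out of the equality analysis for free.
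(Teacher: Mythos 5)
Your proposal follows the paper's proof essentially step for step: unknot within the petal projection by changing strand heights to create and then remove trivial petals, at a worst-case cost of $\frac{p-3}{2}$ passes per reduction from $p$ to $p-2$ petals, sum to obtain $\frac{(p-1)(p-3)}{8}$, invoke Kronheimer--Mrowka to see that $T_{r,r+1}$ attains the bound, and show that equality forces the maximal-height-gap petal permutations, which (up to the stated equivalence) are exactly those of $T_{r,r+1}$ and its mirror. One small correction: the recursion terminates because every $3$-petal projection is the unknot (the trefoil has petal number $5$, not $3$), which is in fact what your summation already implicitly assumes.
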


\begin{proof}
Let $p=p(K)$. Consider a petal projection of the knot $K$ with $p$ petals and a permutation $\sigma=(1,a_2,\ldots,a_p)$.

Let $a_i$ and $a_j$ be two values in $\sigma$. Define $d_p(a_i,a_j)=\min \{d \ge 0|[a_i+d]_p \equiv [a_j]_p \, \mathrm{or} \, [a_i-d]_p\equiv [a_j]_p\}$. Intuitively, this is a measure of how many petals on the knot we must pass through to travel from the strand at height $a_i$ to the strand at height $a_j$. Clearly, $d_p(a_i,a_j)\leq \frac{p-1}{2}$.

If $d_p(a_{i-1},a_i)=1$, then there is a petal that connects the strands with height $a_{i-1}$ and $a_i$. We call this type of petal a {\bf trivial petal} because we can remove it by pulling it through the multi-crossing. Doing so yields a petal projection with $p-2$ petals, as shown in Figure \ref{fig: trivial petal}.

\begin{figure}
\begin{center}
\includegraphics[width=.4\textwidth]{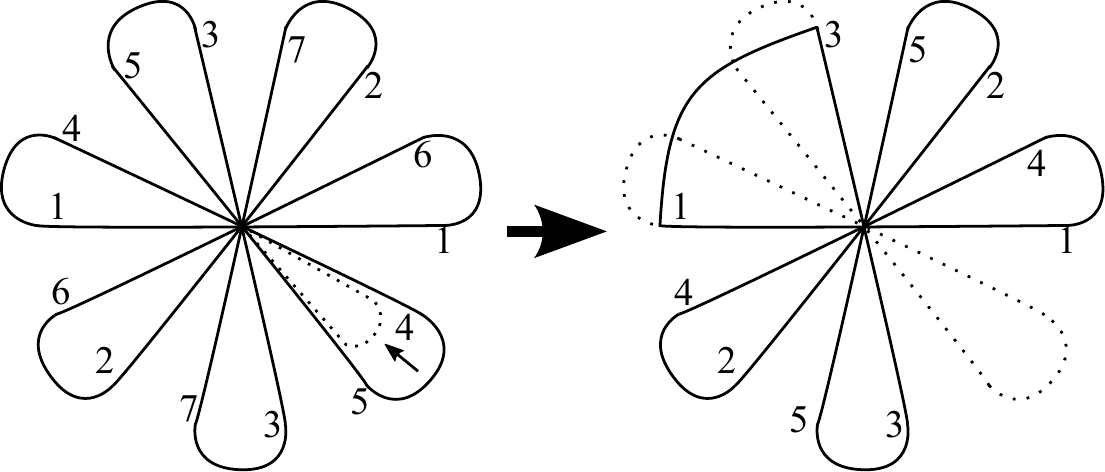}
\caption{Pulling out the trivial petal in $(1,4,5,3,7,2,6)$ leads to $(1,3,5,2,4)$}
\label{fig: trivial petal}
\end{center}
\end{figure}

We now present a method of unknotting a petal projection by changing the relative heights of the strands without changing the rest of the projection. We change the heights of the strands to obtain trivial petals, which we can then remove. We continue this process until we reach a projection with three petals because any projection with three petals is a trivial knot. 

Let $a_i$ and $a_{i-1}$ be numbers in a permutation of corresponding to a petal projection with $p$ petals. Suppose $d_p(a_{i-1},a_i)=d$. Without loss of generality, assume $[a_i]_p=[a_{i-1}+d]_p$. To obtain a trivial petal, we must move the strand initially at the $a_{i-1}$ level so that it is at the $a_i+1$ level. To do so, the strand must pass through the knot $d-1$ times. Next, we remove the trivial petal and obtain a $(p-2)$-petal projection. Since $d_p(a_i,a_j)\leq \frac{p-1}{2}$, the knot will pass through itself at most $(\frac{p-1}{2}-1)$ times in changing the $p$-petal projection into a $(p-2)$-petal projection. We repeat this process until we reach a projection with three petals. We obtain the following bound on unknotting number:

\begin{align*}
u(K)&\leq \bigg(\frac{p-1}{2}-1\bigg)+\left(\frac{(p-2)-1}{2}-1\right)+\dots+\left(\frac{3-1}{2}-1\right)\\
&=\sum_{i=0}^{\frac{p-3}{2}} i =\frac{(p-1)(p-3)}{8}.
\end{align*}



We know from \cite{Multi} that, $p(T_{r,r+1})= p(\overline{T_{r,r+1}})=2r+1$. By \cite{KM}, $u(T_{p,q})=\frac{(p-1)(q-1)}{2}$.  Thus, \[u(T_{r,r+1})=\frac{r(r-1)}{2}=\frac{(2r+1-1)(2r+1-3)}{8}=\frac{(p-1)(p-3)}{8}\] Therefore, $T_{r, r+1}$ torus knots realize the upper bound. 

Now assume that a knot $K$ realizes the upper bound. Then  $d_p(a_{i-1},a_i)=\frac{p-1}{2}$ for all $i \in \{2,\ldots,p\}$. Let $k=\frac{p-1}{2}$. This condition only holds for two equivalence classes of petal permutations,
\begin{align*}
 \overline{[\sigma_1]}=([1]_p,[1+k]_p,[1+2k]_p, \ldots)\\
\overline{[\sigma_2]}=([1]_p,[1-k]_p,[1-2k]_p, \ldots)
\end{align*}
 
Note that these equivalence classes each contain a single element. Therefore, there are only two knots that could possibly realize this bound (one with $[\sigma_1]$, and the other with $[\sigma_2]$). These are the only two permutations because if a petal permutation of $p$ strands is not in $\overline{[\sigma_1]}$ or in $\overline{[\sigma_2]},$ then there is a way to obtain a trivial petal by changing the height of a strand so that the knot passes through itself fewer than $k=\frac{p-1}{2}$ times, meaning the bound is not met. 

The $(r, r+1)$-torus knot has petal number $p=2(\frac{p-1}{2})+1=2r+1$. Since the $(r, r+1)-$torus knot is chiral, it can be denoted by at least two distinct permutations.  The $(r, r+1)-$torus knot (and its mirror) realize the unknotting bound, as proven above, therefore the $(r, r+1)-$torus knot and its mirror must have petal permutations that are equivalent to $\sigma_1$ and $\sigma_2$. Since a petal permutation can only represent one knot, only $(r,r+1)$-torus knots and their mirrors realize the bound.
\end{proof}

\begin{cor}\label{unique}
The unique minimal petal permutation of $T_{r,r+1}$ is $$(1,r+1,2r+1,r,2r,\ldots,2,r+2).$$
\end{cor}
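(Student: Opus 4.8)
The plan is to read the answer off the proof of the preceding theorem together with one modular computation. That proof already establishes that $p(T_{r,r+1})=2r+1$, that the minimal petal permutation of $T_{r,r+1}$ is unique (its equivalence class is a singleton), and that it must be one of the two permutations $\sigma_1,\sigma_2$ with $p=2r+1$ and $k=\frac{p-1}{2}=r$, namely $\sigma_1=([1]_p,[1+k]_p,[1+2k]_p,\dots)$ and $\sigma_2=([1]_p,[1-k]_p,[1-2k]_p,\dots)$. So the first step is simply to write $\sigma_1$ out as an honest permutation of $\{1,\dots,2r+1\}$. Since each consecutive entry increases by $r$ modulo $2r+1$, the entry in position $2j$ equals $[1+(2j-1)r]_{2r+1}=r+2-j$ and the entry in position $2j+1$ equals $[1+2jr]_{2r+1}=2r+2-j$, which yields exactly $\sigma_1=(1,r+1,2r+1,r,2r,\dots,2,r+2)$; the analogous computation gives $\sigma_2=(1,r+2,2,r+3,3,\dots,2r+1,r+1)$.

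What remains is to decide which of $\sigma_1,\sigma_2$ represents $T_{r,r+1}$ and which represents $\overline{T_{r,r+1}}$. The easy half: $\sigma_2$ is obtained from $\sigma_1$ by reversing the clockwise cyclic order of the strands around the single multi-crossing (and re-normalizing so the top strand is listed first), and reversing that cyclic order is precisely what a reflection of the petal projection does; hence $\sigma_1$ and $\sigma_2$ represent mirror knots, consistent with the theorem's conclusion that together they realize $\{T_{r,r+1},\overline{T_{r,r+1}}\}$. The remaining half is to pin down the handedness. Here I would reconstruct the flower diagram associated to $\sigma_1$ — label the $2p$ rays at the crossing cyclically, pair antipodal rays into the $p$ strands at the heights prescribed by $\sigma_1$, join consecutive rays in pairs by the petal arcs, and trace out the knot — and then isotope it, by pulling the petals successively out through the crossing, into a standard diagram of the $(r,r+1)$-torus knot while tracking the signs of the crossings that appear, so as to confirm that the handedness is the one conventionally called $T_{r,r+1}$. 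Equivalently, one may invoke the explicit $(2r+1)$-petal projection of $T_{r,r+1}$ built in \cite{Multi} and verify that its petal permutation is $\sigma_1$.

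I expect this last identification to be the only real obstacle, and it is a ``picture'' obstacle rather than a combinatorial one: the petal diagram, the torus braid word, and the value $u=\frac{(p-1)(p-3)}{8}$ are all invariant under mirroring, so nothing in the unknotting-number argument can distinguish $\sigma_1$ from $\sigma_2$, and the choice must be made by drawing the diagram carefully. Everything else is either a direct citation of the theorem (uniqueness, plus the reduction to the two candidates) or the short modular arithmetic above. As a byproduct this also records that the minimal petal permutation of $T_{r,r+1}$ is literally unique, since $\overline{[\sigma_1]}$ has only one element.
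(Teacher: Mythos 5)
Your proposal is correct and takes essentially the same route as the paper: both arguments reduce, via the equality case of the preceding unknotting-number theorem, to the two singleton classes $\sigma_1,\sigma_2$, which are exactly the two explicit permutations $(1,r+1,2r+1,r,2r,\ldots,2,r+2)$ and $(1,r+2,2,r+3,\ldots,2r+1,r+1)$, one for $T_{r,r+1}$ and one for $\overline{T_{r,r+1}}$. You are in fact slightly more careful than the paper, which carries out neither the modular computation nor any argument pinning down which of the two permutations has the handedness of $T_{r,r+1}$ rather than its mirror; your suggestion to settle that by tracing the diagram or citing the explicit $(2r+1)$-petal projection of $T_{r,r+1}$ in \cite{Multi} addresses a point the paper leaves implicit.
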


\begin{proof}
We may unknot any knot in a petal projection by passing strands through one another to change their heights in the multi-crossing and then removing trivial petals as described above. In a permutation on $p$ strands, the maximum number of crossing changes needed to create a trivial petal is $\frac{p-3}{2}$. Consequently, any permutation on $2r+1$ strands other than 
$$(1,r+1,2r+1,r,2r, \ldots ,2,r+2)$$
\begin{center}
or
\end{center}
$$(1, r + 2, 2, r + 3, 3, \ldots, 2r + 1,r+1)$$
has a lower unknotting number than either $T_{r,r+1}$ or $\overline{T_{r,r+1}}$.
\end{proof}

Note that most knots have more than one minimal petal permutation, and therefore Corollary \ref{unique} is unusual.


\section{Crossing Number and Petal Number}\label{petal into star}

In this section, we present  an upper bound on crossing number in terms of petal number and show that the $(r, r+1)$-torus knots realize this bound. 

\begin{theorem} \label{big} Let $K$ be a knot with petal number $p$. Then, \[\label{petalcrossing}c(K)\leq \frac{p^2-2p-3}{4},\] where $c(K)$ denotes the $c_2$-crossing number of $K$.\label{pcbound}
\end{theorem}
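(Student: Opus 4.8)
The plan is to take a petal projection of $K$ with $p$ petals, realizing the petal number, and to turn it into a classical double-crossing projection by a local replacement at the single $n$-crossing, then count the resulting crossings. The key observation is that an $n$-crossing where $n=p$ strands meet, each bisecting the crossing, can be resolved into a small "star" of double crossings: if we pull the strands slightly apart according to their cyclic order and their heights, two strands cross each other in the resolved picture exactly when their loops "interleave" in the petal diagram, and each such crossing is an honest double crossing whose over/under data is dictated by the petal permutation. So the first step is to describe this resolution precisely and argue it produces a legitimate projection of the same knot $K$.

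Next I would count how many double crossings appear. A crude bound is $\binom{p}{2}$ — every pair of the $p$ strands could cross — but that is far from $\frac{p^2-2p-3}{4}$, so one must be more careful. The right count comes from noticing that consecutive strands in the cyclic order (the ones joined by a petal loop) need not cross each other, and more importantly the loops that are "non-nesting" (the defining feature of a petal projection) force many pairs not to cross. Concretely, I expect that when the $p$ arcs emanating from the crossing are arranged as petals around it, resolving the crossing makes strand $i$ and strand $j$ cross only when one is "inside" the bundle that the other passes over, and a careful accounting of which of the $p$ petals can be routed to avoid contributing a crossing gives at most roughly $\frac{p^2}{4}$ crossings; the exact constant $\frac{p^2-2p-3}{4}$ should drop out by optimizing the routing, e.g. by choosing the two "outermost" petals (and the relabeling freedom from sliding the top strand to the bottom) to shave off the lower-order terms. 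I would also want to double-check small cases: $p=3$ gives $\frac{9-6-3}{4}=0$, consistent with every $3$-petal projection being the unknot; $p=5$ gives $3$, which matches $c(T_{2,3})=3$.

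To show the bound is realized by $T_{r,r+1}$, I would use Corollary \ref{unique}: the minimal petal permutation of $T_{r,r+1}$ is explicitly $(1,r+1,2r+1,r,2r,\ldots,2,r+2)$ with $p=2r+1$, and I would apply the resolution directly to this permutation, checking that every pair of strands that could cross actually does cross (because the height increments are all $\pm k$ with $k=\frac{p-1}{2}$, the "maximally interleaved" case), so the count is attained with equality. Plugging in $p=2r+1$ gives $\frac{(2r+1)^2-2(2r+1)-3}{4}=\frac{4r^2-4}{4}=r^2-1$; since $c(T_{r,r+1})=r^2+r-\gcd$-type formulas — in fact $c(T_{p,q})=\min\{p(q-1),q(p-1)\}$ gives $c(T_{r,r+1})=r\cdot r = r^2$ — I need to be careful here, so part of the plan is to reconcile the arithmetic: most likely the bound is realized in the sense that the resolution of the minimal petal diagram of $T_{r,r+1}$ has exactly $\frac{p^2-2p-3}{4}$ crossings even though this may exceed $c(T_{r,r+1})$, i.e. "realized" means the inequality in the theorem is sharp as a function of $p$ over all knots, witnessed by this family.

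The main obstacle I anticipate is the crossing count for the resolution: making rigorous exactly which pairs of the $p$ strands cross after resolving the $p$-crossing, and showing one can always route the petals (using the equivalence that lets us move the top strand to the bottom, and the freedom in how loops are nested in the plane outside the crossing) so that the total never exceeds $\frac{p^2-2p-3}{4}$, rather than the naive $\binom{p}{2}=\frac{p^2-p}{2}$. I would handle this by setting up a clean combinatorial model: order the petals $1,\dots,p$ around the crossing, assign each the pair of heights of its two strands, and show a crossing between the resolved strands of petals at positions $a$ and $b$ occurs precisely when the height intervals interleave; then maximize the number of interleaving pairs subject to the petal (non-nesting) constraint, which should be a short extremal argument.
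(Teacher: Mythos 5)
Your plan has a genuine gap at its core. You hope that when the single $p$-crossing is resolved, strands $i$ and $j$ cross only when their ``height intervals interleave,'' and that the non-nesting condition plus clever routing caps the count near $\frac{p^2}{4}$. That premise is false: every strand in the multi-crossing bisects the crossing disk, so the endpoints of any two strands alternate around its boundary, and after any small perturbation every pair of strands must cross. The resolution therefore produces a full star with $\binom{p}{2}=\frac{p(p-1)}{2}$ double crossings, with heights only dictating the over/under data, and no choice of routing at the crossing avoids this. The actual content of the theorem is the subsequent reduction, which your proposal does not supply: the paper first removes the monogon at the tip of each petal by $p$ Reidemeister I moves (saving $p$ crossings and cutting each strand's intersections from $p-1$ to $p-3$), and then iteratively pulls the topmost remaining strand out of the star region; the $i$-th such ``strand removal'' deletes $p-2-i$ crossings inside the star but creates $i-1$ new crossings with previously removed strands, for a net change of $2i+1-p$, and one stops after $\frac{p-3}{2}$ iterations, exactly when this stops being negative. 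Summing gives $\frac{p(p-1)}{2}-p+\sum_{i=1}^{(p-3)/2}(2i+1-p)=\frac{p^2-2p-3}{4}$. Your phrase that the constant ``should drop out by optimizing the routing'' is precisely the step that needs this explicit procedure and count; without it there is no proof, and your proposed extremal argument about interleaving pairs is aimed at the wrong quantity.

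Two smaller points. Sharpness is not part of the theorem (it is a separate corollary), and your arithmetic there went astray: $c(T_{p,q})=\min\{p(q-1),q(p-1)\}$ gives $c(T_{r,r+1})=(r+1)(r-1)=r^2-1$, which matches $\frac{p^2-2p-3}{4}$ at $p=2r+1$ exactly, so no reinterpretation of ``realized'' is needed. Also, invoking Corollary \ref{unique} is unnecessary for that step; the paper just cites the known values of $c(T_{r,r+1})$ and $p(T_{r,r+1})$.
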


\begin{proof} We will prove this theorem by showing how to isotope a petal projection of $K$ into a projection with $\frac{p^2-2p-3}{4}$ $c_2$-crossings.  Figure \ref{pic} illustrates this process for a knot satisfying $p(K)=9$. First, perturb the single $p$-crossing to form a star pattern with $\frac{p(p-1)}{2}$ double crossings. When resolving, make sure that each strand is pushed in the direction that makes its petals smaller. This isotopy creates monogons on the ends of each petal. Then perform $p$ Type I Reidmeister moves to remove the monogons on the ends of each arm of the star. This decreases the the number of crossings by $p$.  Moreover, this also reduces the number of intersections of each strand from $(p-1)$ to $(p-3)$.

Next, take the strand that was on top in the petal projection and isotope it so that it extends outside the star. There are two ways to surround the star, but we perform this isotopy so that the top strand surrounds the fewest number of petals. This operation, which we call strand removal, eliminates $p-3$ crossings from within the star region. Remove the second highest strand in the same way. This operation eliminates at least $p-4$ crossings from the star region, but creates a new crossing with the previous strand we removed. The $i^{th}$ iteration of strand removal eliminates $p-2-i$ crossings from the star region and creates $i-1$ crossings with strands that have already been removed. Thus, the number of crossings of the projection changes by $i-1-(p-2-i)=2i+1-p$. We iterate the strand removal $\frac{p-3}{2}$ times to ensure that each iteration decreases the number of crossings, meaning $2i+1-p$ is negative. We obtain projection, $P$ with crossing number as follows, 
\begin{align*}
c(P)&=\frac{p(p-1)}{2}-p+\sum_{i=1}^{ \frac{p-3}{2}} 2i+1-p\\
&=\frac{p(p-1)}{2}-p+\frac{(2+1-p)+(p-3+1-p)}{2}\bigg(\frac{p-3}{2}\bigg)\\
&=\frac{2p(p-1)-4p+(1-p)(p-3)}{4}\\
&=\frac{p^2-2p-3}{4}.
\end{align*}
Therefore, $c(K)\leq\frac{p^2-2p-3}{4}$, as desired.
\end{proof}

\begin{figure}[h!]
\centering
\includegraphics[scale=.7]{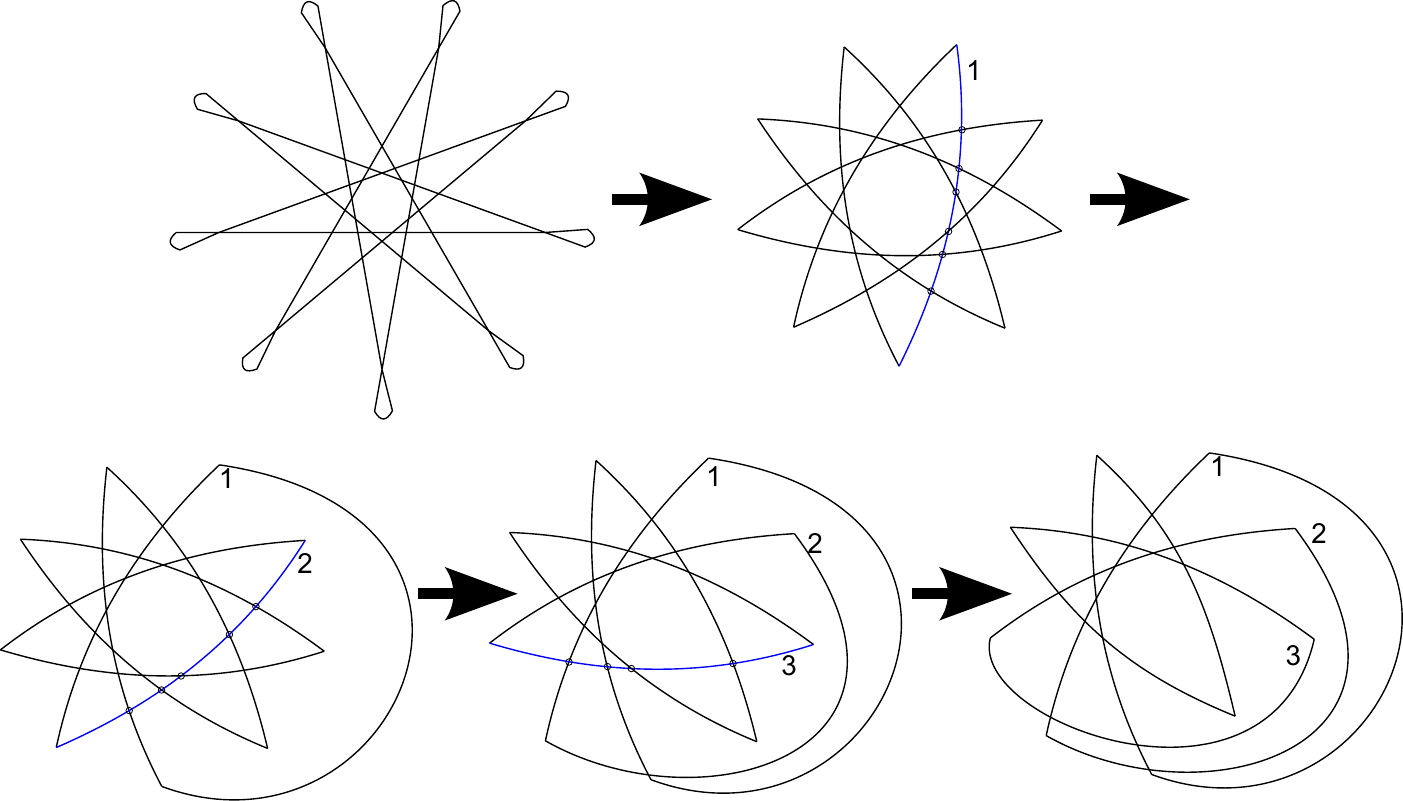}
\caption{Step 1: Perturb the single crossing to form a star pattern.  Step 2: Remove monogons to decrease the number of crossings by $p=9$. Step 3: The first iteration of strand removal decreases the number of crossings by $p-3=6$. Step 4: The second iteration of strand removal decreases the number of crossings by $p-4-1=4$. Step 5: The third iteration of strand removal decreases the number of crossings by $p-5-2=2$.  This is the final iteration because removing another strand does not guarantee that we decrease the number of crossings.}
\label{pic}
\end{figure}

\begin{cor} For all torus knots of the form $T_{r,r+1}$,  
\[c(K)=\frac{p^2-2p-3}{4}.\]
\end{cor}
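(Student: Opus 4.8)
The strategy is to combine the upper bound from Theorem~\ref{big} with the known crossing number of $(r,r+1)$-torus knots, then check that the two expressions agree. First I would recall that $p(T_{r,r+1}) = 2r+1$, a fact established in \cite{Multi} and already used repeatedly above. Substituting $p = 2r+1$ into the bound $c(K) \leq \frac{p^2-2p-3}{4}$ of Theorem~\ref{big} gives
\begin{align*}
\frac{p^2-2p-3}{4} &= \frac{(2r+1)^2 - 2(2r+1) - 3}{4} \\
&= \frac{4r^2 + 4r + 1 - 4r - 2 - 3}{4} \\
&= \frac{4r^2 - 4}{4} = r^2 - 1.
\end{align*}
So Theorem~\ref{big} yields $c(T_{r,r+1}) \leq r^2 - 1$.

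For the reverse inequality I would invoke the classical fact that torus knots are alternating (for $(r,r+1)$ torus knots this follows since $T_{2,n}$ are alternating, but more generally one uses that torus knots have well-understood crossing numbers), or more directly cite the theorem of Murasugi that the crossing number of the torus knot $T_{p,q}$ with $2 \le p < q$ equals $\min\{p(q-1),\,q(p-1)\}$. For $T_{r,r+1}$ this evaluates to $\min\{r\cdot r,\ (r+1)(r-1)\} = \min\{r^2,\ r^2-1\} = r^2 - 1$. Hence $c(T_{r,r+1}) \geq r^2 - 1$, and combining with the upper bound forces $c(T_{r,r+1}) = r^2 - 1 = \frac{p^2-2p-3}{4}$.

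\textbf{Main obstacle.} The only real content beyond arithmetic is justifying the lower bound $c(T_{r,r+1}) \ge r^2-1$, which is not proved in this paper and must be imported from the literature (Murasugi's determination of torus knot crossing numbers, together with the standard presentation of $T_{r,r+1}$ as a braid on $r$ strands, or alternatively the fact that the standard torus knot diagram is alternating and hence minimal by the Kauffman–Murasugi–Thistlethwaite theorem). I would state this citation carefully, since everything else is a one-line substitution. One should also note $r \ge 2$ so that $T_{r,r+1}$ is a genuine nontrivial knot and the petal number formula applies.
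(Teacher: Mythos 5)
Your proof is correct and takes essentially the same route as the paper, which simply quotes the known value $c(T_{r,r+1})=r^2-1$ (cited there to \cite{KM}) together with $p(T_{r,r+1})=2r+1$ from \cite{Multi} and checks that the two sides agree; your version merely makes the upper bound come from Theorem~\ref{big} and the lower bound from Murasugi's crossing-number formula for torus knots, which is the same content packaged slightly differently. One small caution: your parenthetical fallback that torus knots are alternating is false for $r\ge 3$ (only the $T_{2,n}$ family is alternating), but since your primary citation of the torus-knot crossing-number theorem carries the lower bound, the argument stands.
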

\begin{proof}
For a torus knot of the form $T_{r,r+1}$, $c(T_{r,r+1})=r^2-1$  by \cite{KM}, and $p(T_{r,r+1})=2r+1$ by \cite{Multi}. So, in fact, this family of knots realizes the above equality.
\end{proof}

 We now have an infinite family of knots for which the bound in Theorem \ref{big} is realized.


\section{Results on Petal Algorithm}\label{algorithm}

In this section we investigate the petal algorithm, described in \cite{Multi}, which takes a double-crossing projection and generates a petal projection. We consider the construction of minimal petal projections via the petal algorithm. The first question on might ask is whether there is always a projection of a knot such that when the petal algorithm is applied to it, one obtains the petal number of that knot.

\begin{theorem}\label{algorithm-thm}
There exists a double crossing projection of every knot $K$ such that applying the petal algorithm generates the minimal petal projection of $K$.
\end{theorem}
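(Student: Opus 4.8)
The plan is to run the petal algorithm "in reverse." Since Theorem~\ref{algorithm-thm} only asserts \emph{existence} of a good double-crossing projection (not that a minimal-crossing one works — indeed the later discussion of two-braid knots shows a minimal-crossing projection need not suffice), the natural strategy is: start from a minimal petal projection of $K$, and produce \emph{some} double-crossing projection $D$ of $K$ from which the petal algorithm recovers exactly that petal projection. First I would recall precisely how the petal algorithm of~\cite{Multi} operates on a double-crossing diagram: it reads off the diagram along the knot, records the sequence of over/under information, builds a pre-petal projection (the nesting-loop construction with "just under half" the loops nested), and then folds the nesting loop into the central crossing to get a petal projection. The key observation is that each step of this algorithm — perturbing the $n$-crossing into a star, the monogon removals, the loop-nesting, the fold — is, up to isotopy of the diagram in the plane, \emph{reversible}: a petal projection can be "unfolded" into a pre-petal projection, and a pre-petal projection can be expanded by un-nesting loops and re-introducing double crossings to yield a classical diagram.

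The main steps, in order, would be: (1) Fix a minimal petal projection $P$ of $K$ with petal permutation $\sigma$. (2) Unfold the single $p$-crossing: choose the nesting loop as in Figure~\ref{fig:prepetal} read backwards, producing a pre-petal projection $Q$ whose petal data still encodes $\sigma$. (3) From $Q$, reconstruct a double-crossing diagram $D$ by reversing the petal algorithm's grouping construction — essentially, each petal loop of $Q$ together with its height relative to its neighbors becomes a controlled string of double crossings, arranged so that reading $D$ along the knot reproduces exactly the over/under sequence the algorithm would need. (4) Verify that applying the petal algorithm of~\cite{Multi} to $D$ returns $P$ (equivalently, a petal projection with the same equivalence class $\overline{[\sigma]}$ of petal permutation, hence with $p(K)$ petals). (5) Conclude that $D$ is the desired double-crossing projection.

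The hard part will be Step~(3)–(4): making the reversal genuinely algorithmic rather than hand-wavy. The petal algorithm is not obviously injective — many double-crossing diagrams map to the same petal projection — so "inverting" it really means \emph{exhibiting one preimage} and checking it lands correctly, which requires a careful bookkeeping of how the algorithm reads crossing signs and heights. I expect the cleanest route is to track the petal permutation as the complete combinatorial invariant surviving each operation: show that the petal algorithm's output depends only on a certain cyclic height sequence extracted from the input diagram, then build $D$ so that this extracted sequence is precisely the height-change sequence $([a_2-1]_p,\ldots,[1-a_p]_p)$ of $\sigma$. One must also confirm $D$ actually represents $K$ and not merely a diagram with the right petal combinatorics — this should follow because both $D$ and $P$ are obtained from the pre-petal projection $Q$ by ambient isotopy and Reidemeister moves, so they present the same knot. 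A subtlety to address is that the petal algorithm in~\cite{Multi} may impose conventions (e.g. which side the nesting loop folds to, orientation of the star perturbation) that we must respect when constructing $D$; handling these conventions explicitly, perhaps by a short case analysis on the parity of $p$ and the sign of the relevant crossing, is where most of the genuine work lies.
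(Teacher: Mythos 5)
Your overall strategy---start from a minimal petal projection and reverse the petal algorithm to exhibit a double-crossing projection on which the algorithm returns that petal projection---is exactly the strategy of the paper's proof. The genuine gap is that you stop where the proof actually happens: you flag Steps (3)--(4), the construction of the double-crossing diagram $D$ and the verification that the algorithm recovers $P$, as ``the hard part,'' to be handled by bookkeeping of petal permutations, but you never carry it out. The paper's content at precisely this point is a concrete construction: remove the top strand of the minimal petal projection to get a pre-petal projection, rotate so the rightmost point lies on the nesting loop, and then view the projection ``sideways'': draw the $n$ strands as horizontal lines at their respective heights crossing a vertical axis $A$, join them by vertical arcs to the right and left of $A$ according to the pre-petal permutation (the nesting-loop connection being the rightmost arc), and, traversing from a basepoint on the rightmost strand, resolve each double crossing as an overcrossing if it lies to the right of $A$ and an undercrossing if it lies to the left. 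That over/under rule is exactly what makes the petal algorithm---which gathers overcrossings on one side of its axis and undercrossings on the other---fold this diagram back into the original pre-petal, hence petal, projection; and since $D$ is essentially the same spatial knot seen from a perpendicular direction, there is nothing extra to check about $D$ representing $K$.

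Your proposed substitute for this construction, namely proving that the algorithm's output depends only on an extracted cyclic height sequence and then building $D$ to realize the height-change sequence of $\sigma$, is both harder and unnecessary: such a well-definedness statement is not established in \cite{Multi} and is not needed, because one only has to exhibit a single explicit preimage, which the sideways construction does directly. So while your approach is the right one and your Step (2) (unfolding to the pre-petal projection $Q$) matches the paper, as written the proposal leaves its central step unsupported: without the explicit construction of $D$ and the over/under assignment tied to the axis $A$, the claim in Step (4) that the petal algorithm applied to $D$ yields the minimal petal projection is an assertion, not a proof.
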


\begin{proof}
 We take a minimal petal projection and reverse the petal algorithm in order to obtain a double crossing projection. We are essentially showing that any petal projection can be viewed ``sideways" so that there are only double crossings. The reversal of the petal projection is described below.
 
\begin{enumerate}
\item Given any minimal petal projection, take the top strand off to obtain a pre-petal projection, and re-number the strands according to their new heights.  Rotate the pre-petal projection until the rightmost point of the projection is located on the nesting loop.
\item If the pre-petal projection has $n$ strands, draw $n$ horizontal lines that intersect an imaginary vertical line we  denote by $A$. Number each strand according to its height, where the top strand is numbered one.
\item On the right side of $A$, connect the horizontal strands that correspond to the strands in the nesting loop with a vertical strand so that it is the rightmost strand. Connect the remaining horizontal lines with vertical lines according to the pre-petal projection permutation. Note that we connect the horizontal lines so that half of the pre-petal projection is represented on the right of $A$ and the other half is represented on the left of $A$. 
\item Chose an orientation for the knot such that the starting point is any point on the rightmost strand and travel along the knot. When we encounter a crossing, we determine whether a crossing is over or under in the following manner:
\begin{itemize}
\item If the crossing is on the right of $A$, and has not been traveled through, then make the crossing an over crossing according to the orientation.
\item If the crossing is on the left of $A$, and has not been traveled through, then make the crossing an under crossing according to the orientation.
\end{itemize}
\end{enumerate}
Note that we make the crossings over and under in the manner explained in Step $4$ because that is how it is isotoped when doing the petal algorithm from a double crossing to a petal projection.
We now have a double crossing projection that can generate a minimal petal projection if we apply the petal algorithm to it.
\end{proof}

Although Theorem~\ref{algorithm-thm} guarantees that a minimal petal projection of a knot can be obtained by applying the petal algorithm to some projection of the knot, the projection that generates the minimal petal projection of a knot need not be a minimal crossing projection, as shown in this next theorem.

\begin{thm}
\label{2-braid thm}
Given any 2-braid knot $B$ with crossingt number $c > 3$, a minimal petal projection of B cannot be obtained by applying the petal algorithm to a minimal crossing projection of $B$.
\end{thm}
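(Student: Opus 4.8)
The plan is to understand precisely what the petal algorithm does to a 2-braid knot's minimal crossing projection and count the resulting petals, then show that count strictly exceeds $p(B)$. First I would recall the structure of a 2-braid knot $B$ with crossing number $c$: its standard minimal-crossing diagram is the closure of the braid $\sigma_1^c$, an alternating diagram with $c$ crossings. The petal algorithm, as described in \cite{Multi}, walks along the diagram and records, crossing by crossing, a sequence of ``over/under'' choices that get folded into nested and then unnested loops; the number of petals it produces is controlled by the number of maximal monotone runs (or the number of crossings, up to a small additive constant) in the projection it is applied to. So the first real step is to make this bookkeeping explicit: carry out the algorithm on the closure of $\sigma_1^c$ and obtain a closed-form count $P(c)$ for the number of petals produced, likely something on the order of $P(c) = c+1$ or $P(c) = 2c - k$ for a small constant $k$.

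Next I would bring in the known petal number of 2-braid knots. Two-braid knots are the $(2,c)$-torus knots (for $c$ odd) and their close relatives; their petal numbers are known or computable — in particular $p(T_{2,q})$ is small, growing like $q$ but with a better constant than the algorithm's output, and for the non-torus 2-braid knots (twist knots / $(2,c)$ with $c$ even giving links — so really we restrict to knots) one can similarly pin down $p(B)$ from the literature or from the bounds in this paper, e.g. via the unknotting-number bound $u(B) \le (p-1)(p-3)/8$ combined with the known unknotting number of $B$. The comparison step is then arithmetic: show $P(c) > p(B)$ for all $c > 3$, with the threshold $c=3$ (the trefoil, where $p=5$) being exactly the borderline case where the algorithm happens to be optimal. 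I would double-check the small cases $c=3,4,5$ by hand to calibrate the constants, since the statement singles out $c>3$.

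There is a subtlety I would address head-on: ``a minimal crossing projection of $B$'' is not unique, and the petal algorithm's output depends on the projection and on the traversal choices (starting point, orientation). So the theorem requires showing that \emph{no} minimal crossing diagram of $B$, under \emph{any} run of the algorithm, yields the minimal petal number. For 2-braid knots this should be tractable because their minimal diagrams are essentially rigid — by work on alternating knots (Tait flyping) every minimal diagram of $T_{2,c}$ is the standard one up to flypes, and flypes do not change the relevant combinatorial quantity that the petal count depends on. I would argue that the petal count $P(c)$ is an invariant of the flype-equivalence class, hence the same for every minimal diagram, and that the only freedom in the algorithm (choice of basepoint/orientation) permutes but does not reduce the loop count.

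The main obstacle I expect is the first step: pinning down exactly how many petals the petal algorithm produces from the closure of $\sigma_1^c$, including correctly handling the ``fold the nesting loop into the central crossing'' step that converts the pre-petal projection to a petal projection (which can drop the count). The algorithm in \cite{Multi} is stated somewhat informally, so making the count rigorous — rather than just ``about $c$'' — is where the care is needed; a clean way to do this is to track the permutation/height sequence produced by the algorithm directly and count its length, rather than reasoning pictorially. Once $P(c)$ is nailed down exactly and $p(B)$ is known exactly, the inequality $P(c) > p(B)$ for $c>3$ should fall out by a one-line comparison, and the flype-rigidity argument closes the ``every minimal projection'' gap.
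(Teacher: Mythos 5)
Your high-level plan is the same as the paper's (apply the algorithm to the essentially unique reduced alternating diagram of the $2$-braid knot, count the petals produced, and compare with the known petal number), but the step that actually carries the theorem is missing. You leave the petal count $P(c)$ as a quantity ``to be nailed down,'' and you even allow for the possibility $P(c)=c+1$; since the petal number of a $2$-braid knot with crossing number $c$ is $p(B)=c+2$ (a fact from the multi-crossing paper that the proof must quote, not rederive), a count of $c+1$ would make the theorem \emph{false}, so the comparison step cannot be treated as a one-line afterthought. The paper's proof supplies exactly the missing idea: in the alternating traversal there are $\tfrac{c+1}{2}$ overcrossings and $\tfrac{c-1}{2}$ undercrossings before any crossing is revisited, the algorithm sends overcrossings to one side of the axis $A$ and undercrossings to the other, and all but one of the $c$ bigons of the diagram has one crossing of each type, so the axis must cross both edges of each such bigon. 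This forces the knot to meet $A$ on the order of $2c$ times, giving a pre-petal projection with about $2c-2$ loops and hence a petal projection with at least $2c-1$ petals, which exceeds $c+2$ precisely when $c>3$. Nothing in your proposal substitutes for this lower-bound argument, and ``track the permutation produced by the algorithm'' is not yet a proof.

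Two smaller points. First, your suggestion to pin down $p(B)$ via the unknotting bound $u(K)\leq\frac{(p-1)(p-3)}{8}$ goes the wrong way: that inequality gives a \emph{lower} bound on $p(B)$, whereas the comparison $P(c)>p(B)$ needs the exact value (or an upper bound), which must come from the known result $p(B)=c+2$. Second, your flype/rigidity discussion is sound in spirit but more elaborate than needed: for reduced alternating $2$-braid diagrams the minimal projection is unique on the sphere (Tait conjectures, as the paper cites), and the bigon argument yields a lower bound valid for any basepoint and orientation, so the ``every minimal projection, every run'' issue is handled without having to argue that the petal count is a flype invariant.
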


\begin{proof}
Given a minimal double-crossing projection of $B$ with some orientation, which is uniquely determined on the projection sphere by \cite{Kau2}, \cite{Muras}, \cite{Th} and \cite{MT}, apply the petal algorithm. Note that as we travel along the projection, starting at any point on the projection preceding an overcrossing, we will pass through all crossings once before passing through any crossing a second time. Since $B$ is alternating, as we travel along the knot there will be $\frac{c +1}{ 2} $ overcrossings and $\frac{c -1}{ 2} $ undercrossings before we reach a crossing a second time. The mirror of $B$ has $\frac{c +1}{ 2} $ undercrossings and $\frac{c-1}{2}$ overcrossings, however since it has the same petal number, for convenience we will use $B$. Recall that in the petal algorithm, overcrossings are moved to the right of $A$ and undercrossings are moved to the left of $A$, where $A$ is the vertical axis used in the petal algorithm.

Of the $c$ bigons in the projection, all but one will have one crossing labelled $U$ and one crossing labelled $O$. Hence the axis $A$ must cross both of the edges of each of these bigons. Hence,  the knot crosses $A$ at least $2(c -2)$ times. The resulting pre=petal diagram will have $2c -2$ loops and thus, the resulting petal projection will have at least $2c - 1$ petals.  

We know by \cite{Multi}, that $p(B)= c +2$, therefore the minimal double crossing projection of K does not yield a minimal petal projection using the petal algorithm.
\end{proof}

\begin{remark}
Figure \ref{2-braidpetal} shows the double-crossing projection of the 2-braid knot with crossing number $c$ that generates the minimal petal projection of the corresponding knot.  This projection has $\frac{3c-3}{2}$ crossings and  it packs all of these crossings onto three strands. The strand labelled 1 has $\frac{c-3}{2}$ crossings on it, the strand labelled 2 has $\frac{c-1}{2}$ crossings on it, and the strand labelled 3 has $\frac{c+1}{2}$ crossings on it. 
\end{remark}

\begin{figure}[h!]
\centering
\includegraphics[scale=.4]{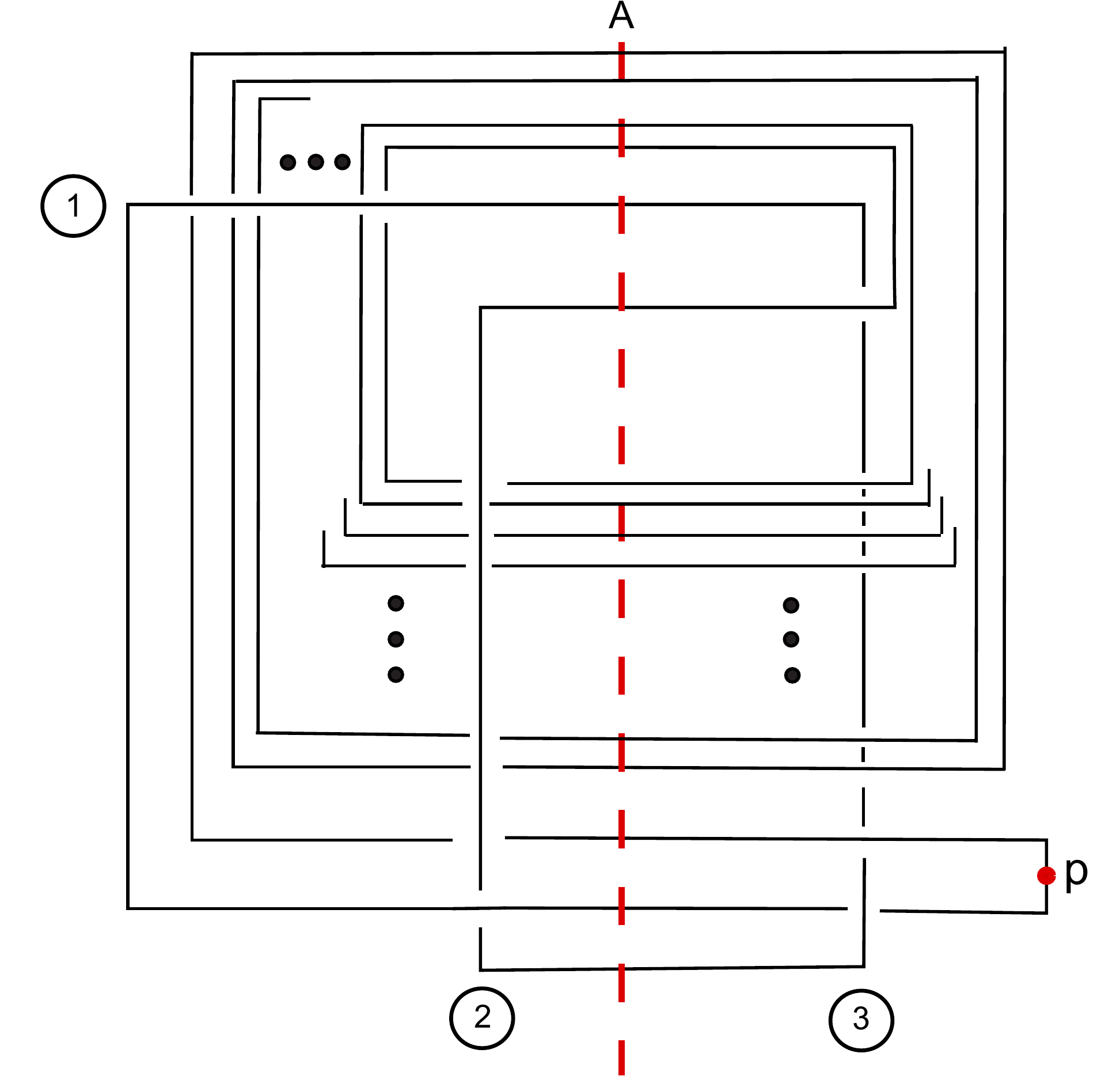}
\caption{The double-crossing projection of 2-braid knots that generates the corresponding minimal petal projections when  the petal algorithm is applied.}
\label{2-braidpetal}
\end{figure}

\pagebreak


\begin{thebibliography}{9}

\bibitem{Triple} C. Adams, \emph{Triple crossing number of knots and links}, C. Adams, J. Knot Theory Ram., \textbf{22} (2013),1350006-1--1350006-17.

\bibitem{Quadruple} C. Adams, \emph{Quadruple crossing number of knots and links}, ArXiv 1211.2726 (2012), to appear in Math. Proc. of the Camb. Phil Soc.

\bibitem{Multi}
C. Adams, T. Crawford, B. DeMeo, M. Landry, A. Tong Lin, M. Montee, S. Park, S. Venkatesh, and F.Yhee, \emph{Knot projections with a single multi-crossing}, ArXiv 1208.5742 (2012).

\bibitem{Adams1}
C. Adams, O.Capovilla-Searle,
J. Freeman, D. Irvine, S. Petti, D.Vitek, A.Weber and 
S. Zhang, \emph{Multicrossing numbers for knots and the span of the bracket polynomial }, preprint.

\bibitem{Kau2}
L. Kauffman, \emph{State models and the Jones polynomial}, Topology
 \textbf{26} (1987), pp. 394--407.
 
 \bibitem{KM} P. Kronheimer, T. Mrowka,\emph{Gauge theory for embedded surfaces I}, Topology \textbf{32} (1993) 773--826.
 
  \bibitem{MT}
W. Menasco, M. Thistlethwaite, \emph{The classification of alternating links}
 Ann. of Math. (2) 138 (1993), no. 1, 113--171.
 
 \bibitem{Muras}
K. Murasugi, \emph{Jones polynomials of alternating links},
Trans. A.M.S.,  \textbf{295} (1986), pp. 147--174.


\bibitem{Schubert}
H. Schubert, \emph{Uber eine numerische knoteninvariante},
Math. Z. \textbf{61} (1954), pp. 245--288.
	
	
\bibitem{Th}
M. Thistlethwaite, \emph{A spanning tree expansion of the Jones polynomial},
Topology,  \textbf{26} (1987), pp. 297--309.

	

	



	
\end{thebibliography}
\end{document}